\numberwithin{equation}{section}
\newtheorem{thm}{Theorem}[section]
\newtheorem{cor}[thm]{Corollary}
\newtheorem{lem}[thm]{Lemma}
\newtheorem{dfn}[thm]{Definition}
\newtheorem{prop}[thm]{Proposition}
\newtheorem{rem}[thm]{Remark}
\theoremstyle{definition}
\newtheorem{exmp}{Example}[section]
\begin{document}

\title[On the regularity of the free boundary in the optimal partial transport problem]{On the regularity of the free boundary in the optimal partial transport problem for general cost functions}

\author[S. Chen]{S.Chen}
\author[E. Indrei]{E. Indrei}


\date{}
\maketitle


\def\signei{\bigskip\begin{center} {\sc Emanuel Indrei\par\vspace{3mm}
MSRI\\  
17 Gauss Way\\
Berkeley, CA 94720\\
email:} {\tt eindrei@msri.org }
\end{center}}

\def\signsc{\bigskip\begin{center} {\sc Shibing Chen\par\vspace{3mm}
MSRI\\  
17 Gauss Way\\
Berkeley, CA 94720\\
email:} {\tt sbchen@math.utoronto.ca}
\end{center}}

\begin{abstract}
This paper concerns the regularity and geometry of the free boundary in the optimal partial transport problem for general cost functions. More specifically, we prove that a $C^1$ cost implies a locally Lipschitz free boundary. As an application, we address a problem discussed by Caffarelli and McCann \cite{CM} regarding cost functions satisfying the Ma-Trudinger-Wang condition (A3): if the non-negative source density is in some $L^p(\mathbb{R}^n)$ space for $p \in (\frac{n+1}{2},\infty]$ and the positive target density is bounded away from zero, then the free boundary is a semiconvex $C_{loc}^{1,\alpha}$ hypersurface. Furthermore, we show that a locally Lipschitz cost implies a rectifiable free boundary and initiate a corresponding regularity theory in the Riemannian setting.    
\end{abstract}

\section{Introduction}

In the optimal partial transport problem, one is given two non-negative functions $f=f\chi_\Omega, \hskip .1in g=g\chi_\Lambda \in L^1(\mathbb{R}^n)$ and a number $0<m \leq \min\{||f||_{L^1}, ||g||_{L^1}\}.$ The objective is to find an optimal transference plan between $f$ and $g$ with mass $m$. A transference plan refers to a non-negative, finite Borel measure $\gamma$ on $\mathbb{R}^n \times \mathbb{R}^n$, whose first and second marginals are controlled by $f$ and $g$ respectively: for any Borel set $A \subset \mathbb{R}^n$,
$$\gamma(A\times \mathbb{R}^n) \leq \int_{A} f(x)dx, \hskip .2in \gamma(\mathbb{R}^n \times A) \leq \int_{A} g(x)dx.$$
An optimal transference plan is a minimizer of the functional  
\begin{equation*} \label{mini}
\gamma \rightarrow \int_{\mathbb{R}^n \times \mathbb{R}^n} c(x,y) d\gamma(x,y),
\end{equation*}       
where $c$ is a non-negative cost function. 

Issues of existence, uniqueness, and regularity of optimal transference plans have recently been addressed by Caffarelli \& McCann \cite{CM}, Figalli \cite{AFi}, \cite{AFi2}, and Indrei \cite{I}. Indeed, existence follows readily by standard methods in the calculus of variations. However, in general, minimizers fail to be unique and it is not difficult to construct examples when $|spt (f \wedge g)|>0$ (with $|\cdot|$ being the Lebesgue measure and $spt(f \wedge g)$ the support of $f \wedge g:=\min \{f,g\}$). Nevertheless, Figalli proved that under suitable assumptions on the cost function, minimizers are unique for $$||f \wedge g||_{L^1(\mathbb{R}^n)}\leq m \leq \min\{||f||_{L^1(\mathbb{R}^n)}, ||g||_{L^1(\mathbb{R}^n)}\} \large,$$ \cite[Proposition 2.2 and Theorem 2.10]{AFi}.  Up to now, the regularity theory has only been developed for the quadratic cost. In this case, if the domains $\Omega$ and $\Lambda$ are bounded, strictly convex, and separated by a hyperplane, Caffarelli and McCann proved (under suitable conditions on the initial data) that the free boundaries $\overline{\partial U_m \cap \Omega}$ and $\overline{\partial V_m \cap \Lambda}$ are locally $C^{1,\alpha}$ hypersurfaces up to a closed singular set $\tilde S$ contained at the intersection of free with fixed boundary \cite[Corollary 7.15]{CM}; here, the free boundaries are generated by the sets $U_m$ and $V_m$ which are referred to as the ``active regions." $U_m$ is defined as the interior of the support of the left marginal of the optimal transference plan, and $V_m$ is similarly defined in terms of the right marginal (a characterization of these regions in terms of the cost function is given by \cite[Corollary 2.4]{CM}).  

In the case when there is overlap, Figalli proved that away from the common region $\Omega \cap \Lambda$, the free boundaries are locally $C^1$ \cite[Theorem 4.11]{AFi}; Indrei improved this result by obtaining local $C^{1,\alpha}$ regularity away from the common region and up to a relatively closed singular set $S$, necessarily contained at the intersection of fixed with free boundary, see \cite[Corollary 3.13]{I} for a precise statement. Moreover, under an additional $C^{1,1}$ regularity assumption on $\Omega$ and $\Lambda$, he proved that $S$ is $\mathcal{H}^{n-2}$  $\sigma$- finite and in the disjoint case $S \subset \tilde S$ with $\mathcal{H}^{n-2}(S) < \infty$ \cite[Theorem 4.9]{I}. 

All of the aforementioned regularity results were developed for the quadratic cost. Our main aim in this paper is to obtain free boundary regularity for a general class of cost functions $\mathcal{F}_0$ satisfying the Ma-Trudinger-Wang (A3) condition introduced in \cite{T1} and used in the development of a general regularity theory for the potential arising in the optimal transportation problem (see Definition \ref{class}). With this in mind, we establish the following theorem which readily implies $C_{loc}^{1,\alpha}$ regularity of the free boundary for the family $\mathcal{F}_0$ and thereby solves a problem discussed by Caffarelli and McCann \cite[pg. 676]{CM}:

\begin{thm} \label{lip} (Lipschitz regularity) 
Let $f=f\chi_\Omega$, $g=g\chi_\Lambda$ be non-negative integrable functions and $m \in \big(0,\min\{||f||_{L^1},||g||_{L^1}\}\big]$. Assume that $\Lambda$ is bounded and $c$-convex with respect to $\Omega$, where $c \in C^1(\mathbb{R}^n \times \mathbb{R}^n)$ and satisfies (\ref{l4}) and (\ref{eee1}). Then the free boundary in the optimal partial transport problem is locally a Lipschitz graph. 
\end{thm}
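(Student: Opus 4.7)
My plan is to establish a uniform interior cone condition for the active region $U_m$ at every point of the free boundary $\overline{\partial U_m \cap \Omega}$; in the presence of a $C^1$ cost this suffices to produce a local Lipschitz graph. The starting point is the duality framework for the partial transport problem developed in \cite{CM, AFi, I}: by extending to a full Monge--Kantorovich problem with auxiliary ``sink'' masses, one obtains Kantorovich potentials $(u,v)$ with $u$ a $c$-convex-type function on $\Omega$ whose positivity set is the active region $U_m$, so that the free boundary is a level set of $u$.

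Fixing a free boundary point $x_0$, the $c$-convexity of $u$ yields a supporting pair $(y_0, \lambda_0) \in \Lambda \times \mathbb{R}$ such that $-c(\cdot, y_0) + \lambda_0$ touches $u$ from below at $x_0$ and vanishes there. This produces the one-sided inclusion
$$\{x \in \Omega : c(x, y_0) < c(x_0, y_0)\} \subset U_m$$
in a neighborhood of $x_0$. Reading (\ref{l4})--(\ref{eee1}) as supplying the non-degeneracy $\nabla_x c(x_0, y_0) \neq 0$ together with twist-type control, the $C^1$ hypothesis and the implicit function theorem imply that $\{c(\cdot, y_0) < c(x_0, y_0)\}$ contains a Euclidean cone with vertex $x_0$, axis $-\nabla_x c(x_0, y_0)$, and some positive opening $\theta(x_0)$; by the inclusion above, this cone lies inside $U_m$.

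The main difficulty is upgrading this pointwise cone to a \emph{uniform} interior cone, so that $\theta$ and a continuously varying axis admit uniform lower and modulus bounds as $x_0$ ranges over compact subsets of the free boundary. I would argue by compactness: along a sequence on which $\theta$ degenerates, boundedness of $\Lambda$ and continuity of $\nabla_x c$ extract limiting supporting data forcing $\nabla_x c = 0$, contradicting the non-degeneracy, while the twist hypothesis (\ref{l4}) and the $c$-convexity of $\Lambda$ with respect to $\Omega$ prevent the cone axes from rotating pathologically. An analogous construction---either through the dual potential $v$ on the conjugate active region $V_m$ or via an exterior $c$-cone at $x_0$---supplies the complementary exterior cone, and a two-sided uniform cone condition then yields the local Lipschitz graph through the classical equivalence between cone conditions and Lipschitz boundaries. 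Securing the uniform opening with only a $C^1$ cost---rather than stronger smoothness---will be the crux of the argument.
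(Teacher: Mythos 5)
Your overall outline (sublevel-set inclusion at each free boundary point, then a cone condition, then a Lipschitz graph) matches the paper's strategy, but the two steps you yourself flag as delicate are precisely where your argument has genuine gaps. First, the uniformity problem is not that the opening $\theta(x_0)$ might degenerate: with $b_1=\inf|\nabla_x c|>0$ from (\ref{eee1}) and the uniform continuity of $c_x$, each pointwise cone already has a uniform profile, and in fact the opening can be taken arbitrarily close to $\pi/2$ (Lemma \ref{cone2} and Remark \ref{ya}). The real obstruction to a graph is that the cone \emph{axes} $\nu_z=-c_x(z,T_m(z))/|c_x(z,T_m(z))|$ vary with $z$, since $T_m(z)$ can be an arbitrary (even multivalued) point of $\overline{\Lambda}$; to produce a graph you need one fixed direction $\xi_x$ and one opening $\alpha_x$ with $C_{\alpha_x}(\xi_x^\perp)$ contained in every pointwise cone near $x$. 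Your compactness argument (``degenerating $\theta$ forces $\nabla_x c=0$'') does not deliver this: even with $|\nabla_x c|$ bounded below, the set of admissible axes could a priori spread over an opening $\geq\pi$ (think of $c_x(x,\Lambda)$ being an annulus around the origin, which is exactly what happens on the sphere in the example of Section 4). What rules this out is the $c$-convexity of $\Lambda$ with respect to $\Omega$: $c_x(x,\Lambda)$ is a bounded \emph{convex} set whose closure misses the origin by (\ref{eee1}), hence $-c_x(x,\Lambda)$ is trapped in a cone $C_{\omega_x}(\xi_x^\perp)$ of opening strictly less than $\pi$, and by $C^1$ continuity the same trapping cone works for all $z$ near $x$. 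You invoke $c$-convexity only as preventing ``pathological rotation'' without any mechanism; this is the crux of the proof, not a technicality. (Also, (\ref{l4}) is simply $\inf c>0$, not a twist condition.)

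Second, your plan to conclude via a two-sided (interior plus exterior) cone condition is unjustified and is not how the result is obtained. No exterior cone is available from the structure of the problem: the inactive region $\Omega\setminus U_m$ has no description as a union of cost sublevel sets, and the dual potential attached to $V_m$ lives on the target side, away from the free boundary in $\Omega$. The paper instead extends the interior cone condition with the \emph{fixed} direction $\xi_x$ from free boundary points to all points of $\overline{U_m}$ in a small ball (using (\ref{yeah}) again), defines $\phi$ as the supremum of the corresponding cone functions, and then must prove both inclusions $\partial U_m\subset\operatorname{graph}\phi$ and $\operatorname{graph}\phi\subset\partial U_m$; the second requires a genuine argument (the cylinder/first-contact contradiction in the proof of Theorem \ref{lip}), because a one-sided cone condition at boundary points alone does not imply the boundary is a graph. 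As written, your proposal asserts the conclusion at exactly the two places where the work is required.
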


The proof of this theorem is based on a cone method: first, we utilize a result of Caffarelli and McCann \cite{CM} to prove that the active region is generated by level sets of the cost function. Thus, the free boundary is locally a suprema of these level sets (at least at the heuristic level). Then, thanks to the assumptions on $c$, we prove that the free boundary enjoys a uniform interior cone condition; this implies that it is locally a Lipschitz graph in some system of coordinates. To solve the problem discussed by Caffarelli and McCann, we connect the free normal with the solution of a generalized Monge-Amp\`{e}re equation for (A3) cost functions and employ regularity results established by Loeper \cite{Lo} and refined by Liu \cite{Li}.    

\begin{cor} \label{ath} ($C^{1,\alpha}$ regularity)
Let $f=f\chi_\Omega \in L^p(\mathbb{R}^n)$ be a non-negative function with $p \in (\frac{n+1}{2},\infty]$, and $g=g\chi_\Lambda$ a positive function bounded away from zero. Moreover, assume $c \in \mathcal{F}_0$, $m \in \big(0,\min\{||f||_{L^1},||g||_{L^1}\}\big]$, $\Omega$ and $\Lambda$ are bounded, $\Lambda$ is relatively $c$-convex with respect to $\Omega \cup \Lambda$, and $\overline{\Omega} \cap \overline{\Lambda} = \emptyset$. Then $\partial U_m \cap \Omega$ is locally a $C^{1,\alpha}$ graph, where $\partial U_m \cap \Omega$ is the free boundary arising in the optimal partial transport problem and $\alpha=\frac{2p-n-1}{2p(2n-1)-n+1}$.
\end{cor}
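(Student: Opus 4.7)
The plan is to combine Theorem~\ref{lip} with Liu's refinement of Loeper's H\"older regularity for optimal transport maps under the Ma-Trudinger-Wang condition, and then to upgrade $C^{1,\alpha}$ regularity of the transport potential to $C^{1,\alpha}$ regularity of the free boundary via the implicit function theorem.

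\textbf{Step 1: Reduction via Theorem~\ref{lip}.} First I would check that the hypotheses of Corollary~\ref{ath} imply those of Theorem~\ref{lip}: costs in $\mathcal{F}_0$ are smooth and satisfy (A3), which entails the structural conditions of Theorem~\ref{lip}, and $c$-convexity of $\Lambda$ with respect to $\Omega \cup \Lambda$ implies $c$-convexity with respect to $\Omega$. Thus $\partial U_m \cap \Omega$ is locally a Lipschitz graph, and the proof of Theorem~\ref{lip} in fact equips it with a uniform interior cone. Because $\overline{\Omega} \cap \overline{\Lambda} = \emptyset$ we have $f \wedge g \equiv 0$, so Figalli's uniqueness result gives a unique optimal plan induced by a $c$-optimal map $T$ and a $c$-convex Kantorovich potential $u$ on $U_m$ normalized so that $u = 0$ on $\partial U_m \cap \Omega$, $u > 0$ inside, and $T(x) = c\text{-}\exp_x(\nabla u(x))$.

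\textbf{Step 2: Apply the Loeper-Liu theory.} On the active region $U_m$, the potential $u$ satisfies the generated Monge-Amp\`ere equation
\[
\det\!\bigl(D^2 u(x) - D_{xx}^2 c(x, T(x))\bigr) = \frac{f(x)}{g(T(x))}\,\bigl|\det D_{xy}^2 c(x, T(x))\bigr|
\]
with $T(U_m) = V_m \subset \Lambda$. The relative $c$-convexity of $\Lambda$ together with the Caffarelli-McCann description of $V_m$ as a sublevel set of the dual potential (\cite{CM}) yields that $V_m$ is $c^*$-convex with respect to $U_m$. With $c \in \mathcal{F}_0$, $f \in L^p$ for $p \in (\tfrac{n+1}{2},\infty]$, and $g$ bounded below, Liu's refinement \cite{Li} of Loeper's theorem \cite{Lo} applies and yields $\nabla u \in C^{0,\alpha}_{\mathrm{loc}}(U_m)$ with precisely $\alpha = \tfrac{2p-n-1}{2p(2n-1)-n+1}$.

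\textbf{Step 3: From potential to free boundary.} The main obstacle is propagating interior $C^{1,\alpha}$ regularity of $u$ up to the free boundary $\partial U_m \cap \Omega$, which is not an a priori smooth boundary. My plan is to exploit the uniform interior cone supplied by Theorem~\ref{lip}: near any $x_0 \in \partial U_m \cap \Omega$ it forces quantitative nondegenerate growth of $u$ along inward rays, so $|\nabla u|$ stays bounded below near $x_0$. Locally enlarging $V_m$ to a slightly bigger $c^*$-convex target and extending $f$ in a controlled way lets me apply Liu's estimate on a full neighborhood of $x_0$, producing a $C^{0,\alpha}$ extension of $\nabla u$ across $\partial U_m \cap \Omega$ with $\nabla u(x_0) \neq 0$ pointing into $U_m$.

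\textbf{Step 4: Conclude.} Since the free boundary is locally $\{u = 0\}$, $u \in C^{1,\alpha}$ up to this set, and $\nabla u$ is transverse to it, the implicit function theorem represents $\partial U_m \cap \Omega$ locally as a $C^{1,\alpha}$ graph with the stated exponent, completing the proof.
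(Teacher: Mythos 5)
Your Steps 1 and 4 follow the spirit of the paper, but Steps 2--3 contain a genuine gap, and it is precisely the point the paper's proof is designed to avoid. In Step 2 you apply the Loeper--Liu theory to the partial transport $T\colon U_m\to V_m$, which requires the \emph{target of that transport} to be $c$-convex with respect to the source; your claim that $V_m$ is $c^*$-convex with respect to $U_m$ is unjustified and false in general: since $m$ may be strictly less than $\|g\|_{L^1}$, $V_m$ is a proper active subregion of $\Lambda$ with its own free boundary (by \cite[Corollary 2.4]{CM} it is built from sublevel sets of the cost intersected with $\Lambda$), and the hypothesis only gives $c$-convexity of $\Lambda$, not of $V_m$. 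Step 3 does not repair this: ``enlarging $V_m$ to a slightly bigger $c^*$-convex target and extending $f$ in a controlled way'' changes the data, hence the optimal map and its potential, and you give no argument that the new potential agrees with (or extends) $u$ near $x_0$; so the claimed $C^{0,\alpha}$ extension of $\nabla u$ across the free boundary is not established.

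The paper removes both difficulties with Figalli's construction \cite{AFi}: the partial optimal map $T_m$ is the restriction of the optimal map for the \emph{classical} transport problem between the extended densities $f+(g-g_m)$ and $g+(f-f_m)$, and since $T_m$ fixes the inactive region, $(T_m)_\#\big(f_m+(g-g_m)\big)=g$. Thus the relevant target is all of $\Lambda$, which is $c$-convex by hypothesis, and with $f':=f_m+(g-g_m)$ one has $|\det(D^2_{xy}c)|\,f'/g(T_m)\in L^p(U_m\cap\Omega)$, so Liu's theorem \cite{Li} applies directly and yields $\Psi_m\in C^{1,\alpha}(\overline{U_m\cap\Omega})$; no geometric hypothesis on the source is needed, hence nothing has to be extended across the free boundary. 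The conclusion is then reached not via an implicit function theorem on $\{u=0\}$ (note $u$ is only defined and regular from one side of the free boundary, so that invocation is also loose), but by observing that the Lipschitz graph furnished by Theorem \ref{lip} has a.e.\ normal $\nu_z=-c_x(z,T_m(z))/|c_x(z,T_m(z))|=-\nabla\Psi_m(z)/|\nabla\Psi_m(z)|$, which is nonvanishing by (\ref{eee1}) and H\"older continuous by the regularity of $\Psi_m$; a Lipschitz graph with $C^{0,\alpha}$ normal is a $C^{1,\alpha}$ graph. Unless you can actually prove $c$-convexity of $V_m$ or rigorously justify your extension step, your argument as written does not go through.
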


In fact, thanks to the method developed by Figalli \cite{AFi}, one can localize the problem and eliminate the disjointness assumption $\overline{\Omega} \cap \overline{\Lambda} = \emptyset$, see Corollary \ref{co1}. We note that to obtain the Lipschitz result, we only need the cost to be $C^1$; however, with merely a locally Lipschitz assumption, the free boundary can still be shown to be rectifiable, see Proposition \ref{rect}.   

The rest of the paper is organized as follows: in \S $2$, we state and prove some preliminary facts. Then in \S $3$, we proceed with the proof of Theorem \ref{lip} and in \S $4$ address the problem in a Riemannian setting.

\section{Preliminaries}
\begin{dfn} \label{cone}
Given an $(m-1)$-plane $\pi$ in $\mathbb{R}^m$, we denote a general cone with respect to $\pi$ by $$C_\alpha(\pi):= \{z \in \mathbb{R}^m: \alpha |P_\pi(z)| < P_{\pi^\perp}(z)\},$$ where $\pi \oplus \pi^\perp = \mathbb{R}^m$, $\alpha>0$, and $P_\pi(z)$ $\&$ $P_{\pi^\perp}(z)$ are the orthogonal projections of $z\in \mathbb{R}^m$ onto $\pi$ and $\pi^\perp$, respectively. 
\end{dfn}

\begin{dfn}
A domain $D$ is said to satisfy the uniform interior cone condition if there exists $\alpha>0$ and $\delta>0$ such that for all $x \in \partial D$ there exists $\nu_x \in \mathbb{S}^{n-1}$ so that $$(y+C_\alpha(\nu_x^\perp)) \cap B_\delta(x) \subset D \cap B_\delta(x),$$ for all $y \in \overline D \cap B_\delta(x)$. We define the profile of such domains to be the ordered pair $(\delta,\alpha)$.
\end{dfn}

\begin{dfn}
A domain $D \subset \mathbb{R}^n$ is said to satisfy a uniform interior ball condition if there exists $r>0$ such that for all $x \in \partial D$, there exists $\nu_x \in \mathbb{S}^{n-1}$ for which $B_r(x+r\nu_x) \subset D$.  
\end{dfn}

\begin{dfn} \label{class}
We denote by $\mathcal{F}$, the collection of cost functions $c: \mathbb{R}^n \times \mathbb{R}^n \rightarrow \mathbb{R}$ that satisfy the following three conditions: 
\vskip .1in 
\noindent 1. $c \in C^2(\mathbb{R}^n \times \mathbb{R}^n)$;\\
\vskip .05in 
\noindent 2. $c(x,y) \geq 0$ and $c(x,y)=0$ only for $x=y$;\\
\vskip .05in
\noindent 3. (A1) For $x,p \in \mathbb{R}^n$, there exists a unique $y=y(x,p)\in \mathbb{R}^n$ such that $\nabla_x c(x,y) = p$ (left twist); \indent  similarly, for any $y, q \in \mathbb{R}^n$, there exists a unique $x=x(y,q) \in \mathbb{R}^n$ such that $\nabla_yc(x,y)=q$ \indent (right twist).\\   
\vskip .05in
\noindent Furthermore, we denote by $\mathcal{F}_0$, the set of $C^4(\mathbb{R}^n \times \mathbb{R}^n)$ cost functions in $\mathcal{F}$ that satisfy:\\
\vskip .05in
\noindent 4. (A2) $\det(\nabla_{(x,y)} c) \neq 0$ for all $x,y \in \mathbb{R}^n$;\\
\vskip .05in
\noindent 5. (A3) For $x,p \in \mathbb{R}^n$, $$A_{ij,kl}(x,p) \xi_i\xi_j\eta_k\eta_l \geq c_0|\xi|^2|\eta|^2 \hskip .1in \forall \hskip.1in \xi, \eta \in \mathbb{R}^n, \langle \xi, \eta\rangle = 0, c_0>0,$$
where $A_{ij,kl}:= c^{r,k}c^{s,l}(c^{m,n}c_{ij,m}c_{n,rs}-c_{ij,rs})$, and $(c^{i,j})$ is the inverse matrix of $(c_{i,j})$.   
\end{dfn}

\begin{rem}
Some authors use the notation $(A3)_s$ in place of $(A3)$ in condition $5$ of Definition \ref{class}.  
\end{rem}

\begin{dfn}
A set $V \subset \mathbb{R}^n$ is $c$-convex with respect to another set $U \subset \mathbb{R}^n$ if the image $c_x(x,V)$ is convex for each $x\in U$.  
\end{dfn}

\begin{lem} \label{cone2}
Let $\Omega \subset \mathbb{R}^n$, $\Lambda \subset \mathbb{R}^n$ be two domains and $c \in C^1(\mathbb{R}^n \times \mathbb{R}^n)$; assume 
\begin{equation} \label{l4}
b_0:= \inf_{x \in \Omega, y \in \Lambda} c(x,y)>0. 
\end{equation}

\begin{equation} \label{eee1}
b_1:= \inf_{x \in \Omega, y \in \Lambda} |\nabla_x c(x,y)|>0,
\end{equation} 

Then for any $b \geq b_0$ and $y \in \overline{\Lambda}$, the domain $E_y^b:=\{x \in \Omega: c(x,y)<b\}$ satisfies a uniform interior cone condition with profile depending only on $b_0$, $||c||_{C^1}$, and the modulus of continuity of $c_x$. 
\end{lem}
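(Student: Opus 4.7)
The plan is to combine the pointwise non-degeneracy \eqref{eee1} with the uniform continuity of $c_x$ to read off a cone condition at each boundary point of $E_y^b$ lying on the level set $\{c(\cdot,y)=b\}$. At such a boundary point $x_0$ I would take the natural candidate direction
$$\nu_{x_0}:=-\frac{\nabla_x c(x_0,y)}{|\nabla_x c(x_0,y)|}\in\mathbb{S}^{n-1},$$
which is well-defined thanks to \eqref{eee1}, and fix an opening, say $\alpha=1$. The heuristic is transparent: $-\nabla_x c(x_0,y)$ points into the sublevel set, and by continuity of $\nabla_x c$ the gradient at nearby points is close to $\nabla_x c(x_0,y)$, so $c(\cdot,y)$ continues to decrease along any direction making a uniformly bounded angle with $\nu_{x_0}$.

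To make this rigorous, I would pick any $x'\in\overline{E_y^b}\cap B_\delta(x_0)$ and any $z\in C_\alpha(\nu_{x_0}^\perp)$ with $x'+z\in B_\delta(x_0)$, and compute by the fundamental theorem of calculus
$$c(x'+z,y)-c(x',y)=\nabla_x c(x_0,y)\cdot z+\int_0^1\bigl(\nabla_x c(x'+tz,y)-\nabla_x c(x_0,y)\bigr)\cdot z\,dt.$$
Convexity of $B_\delta(x_0)$ keeps the segment $[x',x'+z]$ inside the ball, so the error integrand is bounded in absolute value by $\omega(\delta)|z|$, where $\omega$ denotes the modulus of continuity of $c_x$. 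For the main term, the cone inequality $z\cdot\nu_{x_0}>\alpha|P_{\nu_{x_0}^\perp}z|$ yields $z\cdot\nu_{x_0}\ge \frac{\alpha}{\sqrt{1+\alpha^2}}|z|$ after squaring, so by \eqref{eee1},
$$\nabla_x c(x_0,y)\cdot z\le -\frac{b_1\alpha}{\sqrt{1+\alpha^2}}|z|.$$
Choosing $\delta>0$ small enough that $\omega(\delta)<\frac{b_1\alpha}{2\sqrt{1+\alpha^2}}$ then forces
$$c(x'+z,y)-c(x',y)\le -\frac{b_1\alpha}{2\sqrt{1+\alpha^2}}|z|<0,$$
so $c(x'+z,y)<c(x',y)\le b$ and $x'+z\in E_y^b$.

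Because the selection of $\delta$ and $\alpha$ uses only $b_1$ and $\omega$ (and not $x_0$, $y$, or $b$), the resulting profile $(\delta,\alpha)$ is uniform in the claimed quantities, with the dependence on $\|c\|_{C^1}$ entering through the bound on $c_x$ built into $\omega$. I do not anticipate any serious obstacle here; the only point one must be careful about is to anchor the cone direction $\nu_{x_0}$ to the fixed boundary point, not to the sliding base $x'$, so that the Taylor-type remainder is controlled uniformly across $B_\delta(x_0)$ rather than degenerating as $x'$ approaches the level set.
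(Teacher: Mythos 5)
Your argument is correct and is essentially the paper's own proof: a first-order expansion of $c(\cdot,y)$ anchored at a boundary point, with the lower bound $b_1$ from \eqref{eee1} supplying the main negative term along cone directions and the modulus of continuity of $c_x$ absorbing the remainder on a ball of radius $\delta=\delta(\alpha,b_1,\omega)$, uniformly in $y$ and $b$. If anything you are slightly more careful than the paper, which only checks decrease along the cone with vertex at $x_0$ itself, whereas you verify the inclusion for every base point $x'\in\overline{E_y^b}\cap B_\delta(x_0)$, as the definition of the uniform interior cone condition requires.
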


\begin{proof}
Fix $y \in \overline{\Lambda}$ and consider $\phi(x):=c(x,y)$. Then for a fixed point $x_0 \in \{x\in \Omega:\phi(x)=b\}$, we choose a coordinate system such that $x_n$ is the direction of the normal to the level set pointing into the sublevel set $\{x \in \Omega:\phi(x)\leq b\}$ and $x_0$ is the origin. Let $0<\theta<\frac{\pi}{2}$ and note that if $x$ has angle $\theta$ with $e_n$, then
$$\phi(x)=\phi(0)+\nabla \phi(x) \cdot x + o(x) \leq \phi(0)-b_1|x|cos(\theta)+o(x).$$ Now since $c \in C^1(\mathbb{R}^n \times \mathbb{R}^n)$, by the uniform continuity of $c_x$ we have $o(x) \leq \frac{1}{2} b_1|x|cos(\theta)$, for $x \in B_\delta(0)$ and $\delta>0$ (depending on $b_1$, $\theta$, and the modulus of continuity of $c_x$). Thus, $\phi(x) < b$ when $x$ has angle at most  $\theta$ from $e_n$ and is in the $\delta$-ball around the origin.  
\end{proof}

\begin{rem} \label{dir}
By the positivity of $b_1$ in (\ref{eee1}), it follows that we may take $\nu_x:=-\frac{c_x(x,y)}{|c_x(x,y)|}$ as the direction of the cone at each point $x \in \partial E_y^b$ and $y \in \overline \Lambda$. 
\end{rem}

\begin{rem} \label{ya}
Note that since $c \in C^1$, for a sufficiently small $\delta>0$, we may take $\theta$ arbitrarily close to $\frac{\pi}{2}$ in the proof of Lemma \ref{cone2}. In other words, given any $\alpha>0$, there exists $\delta(\alpha)>0$ such that $(\delta(\alpha), \alpha)$ can be taken as a profile for the level sets of $c$ under the assumptions of Lemma \ref{cone2}.  
\end{rem}

\noindent If the domains $\Omega$ and $\Lambda$ have disjoint closures and $c(x,y)$ is a continuous cost function vanishing only on the diagonal $x=y$, then (\ref{l4}) follows readily. The next lemma gives sufficient conditions for (\ref{eee1}) to hold. 

\begin{lem} \label{conddd}
Let $\Omega \subset \mathbb{R}^n$, $\Lambda \subset \mathbb{R}^n$ be two domains and $c \in C^1(\mathbb{R}^n \times \mathbb{R}^n)$; assume 
$$
\inf_{x \in \Omega, y \in \Lambda} c(x,y)>0,$$
and suppose $c$ satisfies the left twist condition and condition $2$ in Definition $2.4$. Then
$$\inf_{x \in \Omega, y \in \Lambda} |\nabla_x c(x,y)|>0.$$
\end{lem}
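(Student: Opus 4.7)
The plan is a proof by contradiction that combines a compactness argument with the injectivity supplied by the left twist condition. Suppose instead that $\inf_{x \in \Omega, y \in \Lambda} |\nabla_x c(x,y)| = 0$, and select a sequence $(x_k, y_k) \in \Omega \times \Lambda$ with $|\nabla_x c(x_k, y_k)| \to 0$. Since in the setting where this lemma is applied $\overline{\Omega}$ and $\overline{\Lambda}$ are compact, we may pass to a subsequence and assume $x_k \to x_\infty \in \overline{\Omega}$ and $y_k \to y_\infty \in \overline{\Lambda}$. By continuity of $\nabla_x c$, this limit pair satisfies $\nabla_x c(x_\infty, y_\infty) = 0$.

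The key step is then to identify $(x_\infty, y_\infty)$ using the hypotheses on $c$. Condition $2$ of Definition \ref{class} says $c(\cdot, x_\infty) \geq 0$ with equality precisely at $x = x_\infty$; consequently this is a global minimizer of $c(\cdot, x_\infty)$, and by $C^1$ regularity $\nabla_x c(x_\infty, x_\infty) = 0$. On the other hand, the left twist condition asserts that for each fixed $x$ the map $y \mapsto \nabla_x c(x, y)$ is a bijection of $\mathbb{R}^n$. Applying it at $x = x_\infty$ with $p = 0$ yields a \emph{unique} $y$ satisfying $\nabla_x c(x_\infty, y) = 0$; both $y = x_\infty$ and $y = y_\infty$ solve this equation, so necessarily $y_\infty = x_\infty$.

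This forces $c(x_\infty, y_\infty) = c(x_\infty, x_\infty) = 0$. However, by continuity together with the hypothesis $\inf_{\Omega \times \Lambda} c > 0$, one has $c(x_\infty, y_\infty) = \lim_{k} c(x_k, y_k) \geq \inf_{\Omega \times \Lambda} c > 0$, a contradiction, completing the proof.

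The main subtle point is the compactness extraction step: if $\Omega$ or $\Lambda$ were unbounded one would need to exclude the possibility that the sequence escapes to infinity, which in general would require some growth control on $c$. In the present paper the lemma is used on bounded domains, so this is not an issue. Beyond that, the argument is essentially a one-line use of twist uniqueness together with the fact that, under condition $2$, the diagonal $\{x=y\}$ is exactly the zero set of $\nabla_x c$ in the $y$-variable.
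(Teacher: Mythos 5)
Your argument is correct and follows essentially the same route as the paper: assume the infimum vanishes, locate a point of $\overline{\Omega}\times\overline{\Lambda}$ where $\nabla_x c=0$, use condition $2$ to see that $\nabla_x c(\bar x,\bar x)=0$, invoke uniqueness in the left twist condition to force $\bar y=\bar x$, and contradict the positivity of $\inf c$. The only difference is that you spell out the compactness extraction (and correctly flag that it needs bounded domains), a step the paper leaves implicit when it passes from a vanishing infimum to an actual zero of $\nabla_x c$ on the closure.
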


\begin{proof}
Suppose on the contrary that there exists $(\bar x, \bar y) \in \overline{\Omega} \times \overline{\Lambda}$ for which $\nabla_x c(\bar x, \bar y)=0$. Let $\phi(x):= c(x, \bar x)$; using condition $2$, $\phi(x) \geq 0$ and $\phi(x)=0$ only for $x=\bar x$. Therefore, $\nabla_x c(\bar x, \bar x) = 0$, but by uniqueness, we must have $\bar x =\bar y$ (using the left twist condition), and this contradicts the positivity of $b_0$.
\end{proof}

\begin{lem} \label{l5}
Let $c \in \mathcal{F}$, and consider two domains $\Omega \subset \mathbb{R}^n$, $\Lambda \subset \mathbb{R}^n$ with disjoint closures; set 
$$
b_0 = \inf_{x \in \Omega, y \in \Lambda} c(x,y)>0.
$$
Then for any $b \geq b_0$ and $y \in \overline{\Lambda}$, the domain $E_y^b:=\{x \in \Omega: c(x,y)<b\}$ satisfies a uniform interior ball condition with radius $r=r(b_0, b_1, ||c||_{C^2})>0$, where $b_1$ is defined by (\ref{eee1}). 
\end{lem}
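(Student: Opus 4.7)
The plan is to exploit the $C^{2}$ regularity: writing $\phi(x) := c(x,y)$, we have $|\nabla\phi| \geq b_{1}$ on $\overline{\Omega}$ and $|D^{2}\phi| \leq M := ||c||_{C^{2}(\overline{\Omega}\times\overline{\Lambda})}$, so the level hypersurface $\{\phi = b\}$ has principal curvatures bounded by $M/b_{1}$ and therefore admits an interior tangent ball of radius $b_{1}/M$ on the sublevel side. The positivity $b_{1} > 0$ is not an additional hypothesis but rather a consequence of Lemma \ref{conddd}: the class $\mathcal{F}$ encodes the left twist (condition 3) together with the diagonal positivity $c(x,y) = 0 \Leftrightarrow x = y$ (condition 2), and with $b_{0} > 0$ in hand Lemma \ref{conddd} produces $b_{1} > 0$.

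Fix $y \in \overline{\Lambda}$ and a free-boundary point $x_{0} \in \partial E_{y}^{b} \cap \Omega$, so $\phi(x_{0}) = b$. Following Remark \ref{dir}, take
$$
\nu_{x_{0}} := -\frac{\nabla_{x} c(x_{0},y)}{|\nabla_{x} c(x_{0},y)|}, \qquad r := \frac{b_{1}}{M},
$$
and consider the ball $B_{r}(x_{0} + r\nu_{x_{0}})$. For $z$ in this open ball, set $v := z - x_{0}$; expanding $|v - r\nu_{x_{0}}|^{2} < r^{2}$ gives the strict inequality $v \cdot \nu_{x_{0}} > |v|^{2}/(2r)$, which forces $v \neq 0$. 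Applying second-order Taylor to $\phi$ at $x_{0}$ together with the identities $\nabla\phi(x_{0}) \cdot v = -|\nabla\phi(x_{0})|\,(v \cdot \nu_{x_{0}}) \leq -b_{1}(v \cdot \nu_{x_{0}})$ and $|D^{2}\phi| \leq M$ yields
$$
\phi(z) - b \;\leq\; -b_{1}(v \cdot \nu_{x_{0}}) + \tfrac{M}{2}|v|^{2} \;<\; \tfrac{|v|^{2}}{2}\bigl(M - \tfrac{b_{1}}{r}\bigr) \;=\; 0,
$$
so $\phi(z) < b$, i.e.\ $z$ lies in the ambient sublevel set.

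The only real subtlety is a bookkeeping one: the ball $B_{r}(x_{0} + r\nu_{x_{0}})$ automatically sits inside $\{c(\cdot, y) < b\}$, but membership in $E_{y}^{b}$ additionally requires $z \in \Omega$. Thus the uniform interior ball condition is delivered at the free portion of the boundary (the level-set points in the interior of $\Omega$), which is precisely the piece relevant for the free-boundary regularity analysis of the later sections; at fixed-boundary points $x_{0} \in \partial\Omega$, no ball of this form can be asserted without a geometric hypothesis on $\Omega$ itself. Apart from this interpretive point, I do not anticipate any further obstacle: the proof reduces to a clean second-order Taylor estimate whose radius $r = b_{1}/M$ is controlled solely by $b_{0}$ (via $b_{1}$, through Lemma \ref{conddd}) and $||c||_{C^{2}}$, as required.
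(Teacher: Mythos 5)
Your proof is correct and is essentially the same as the paper's: the paper also deduces $b_1>0$ from Lemma \ref{conddd}, takes the tangent ball of radius $r=b_1/\|c\|_{C^2}$ in the direction $-\nabla_x c(x_0,y)/|\nabla_x c(x_0,y)|$, and concludes via the identical second-order Taylor estimate (your inequality $v\cdot\nu_{x_0}>|v|^2/(2r)$ is the paper's $\cos\theta>|x|/(2r)$ in coordinate-free form). Your closing caveat about intersecting the ball with $\Omega$ is a fair reading of the statement, and the paper's own proof silently makes the same identification, so no discrepancy there.
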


\begin{proof}
First, note that since $c \in \mathcal{F}$ we have $b_1>0$ by Lemma \ref{conddd}. Now for a fixed $y_0 \in \overline{\Lambda}$, denote $\phi(x):=c(x,y_0)$. Then for a fixed point $x_0 \in \{x \in \Omega:\phi(x)=b\}$, we choose a coordinate system such that $x_n$ is the direction of the normal to the level set pointing into the sublevel set $\{x \in \Omega:\phi(x)\leq b\}$ and $x_0$ is the origin. Now let $r:=\frac{b_1}{c_2}$, where $c_2=||c||_{C^2}$, and consider the ball $B_r$ centered at $(0,\ldots, r)$ with radius $r$. In particular $\partial B_r$ touches the origin. Now we will show that $B_r \subset \{x \in \Omega:\phi(x)<b\}$: indeed, it is simple to see that for $x \in B_r$, $\cos(\theta) > \frac{|x|}{2r}=\frac{|x|c_2}{2b_1}$, where $\theta$ is the angle between $x$ and $e_n$. Therefore, 
\begin{align*}
\phi(x) &\leq \phi(0)+ \nabla \phi(0) \cdot x + \frac{c_2}{2} |x|^2 \\
&= b - |\nabla \phi(0)| e_n \cdot x + \frac{c_2}{2} |x|^2\\
&< b - (b_1 |x|)\bigg(\frac{|x|c_2}{2b_1}\bigg) +\frac{c_2}{2} |x|^2 =b.\\ 
\end{align*}              
\end{proof}
\begin{rem}
By interchanging the roles of $x$ and $y$ in Lemma \ref{l5}, a similar statement holds for $E_x^b:=\{y \in \Lambda: c(x,y)<b\}$.          
\end{rem}

\section{Regularity theory}

\begin{proof}[\bf{Proof of Theorem \ref{lip}}]
By our assumptions, we have
\begin{equation} \label{yeah}
U_m \cap \Omega:= \bigcup_{(\bar x, \bar y) \in \gamma_m} \{x \in \Omega: c(x,\bar y) < c(\bar x, \bar y)\},
\end{equation}
see \cite[Corollary 2.4]{CM}. Next, let $x \in \partial U_m \cap \Omega$ and note that since $$x \in \partial \{x \in \Omega: c(x,\bar y) < c(\bar x, \bar y)\},$$ Lemma \ref{cone2} implies the existence of a profile $(\delta, \alpha)$ so that $$\big (x+C_\alpha(\nu_x^\perp) \big) \cap B_\delta(x) \subset (U_m \cap \Omega) \cap B_\delta(x),$$ where $\nu_x:=-\frac{c_x(x,T_m(x))}{|c_x(x,T_m(x))|}$ (see Remark \ref{dir}) and $T_m$ is the map in \cite[Lemma 2.3]{CM} on whose graph $\gamma_m$ is supported. Note that $T_m$ may be multi-valued (i.e. there may exist $(x,y_1)$ and $(x,y_2)$ in the support of $\gamma_m$ with $y_1\neq y_2$). In this case, we select one of them as the value of $T_m(x)$ (the key fact is: $\{T_m(x)\} \subset \overline{\Lambda}$). Now for $z \in \partial U_m \cap \Omega \cap B_\delta(x)$, consider the bounded, convex set $c_x(z,\Lambda)$ (the boundedness follows from the $C^1$ regularity of $c$ and the boundedness of $\Omega$ and the convexity is a result of the $c$-convexity assumption of $\Lambda$). As 
$$0<\inf_{z \in \Omega, y \in \Lambda} c_x(z,y):=b_1$$ (this follows from (\ref{eee1})), the origin is not in the closure of $c_x(x,\Lambda)$. Thus, we may find $\xi_x \in \mathbb{S}^{n-1}$ and $\omega_x>0$ so that $-c_x(x, \Lambda) \subset C_{\omega_x}(\xi_x^\perp)$ (i.e. we may trap a bounded, convex set whose closure does not containing the origin inside a cone of opening smaller than $\pi$). Up to possibly decreasing $\omega_x$ slightly, we may assume that the boundary of $-c_x(x, \Lambda)$ is disjoint from the boundary of $C_{\omega_x}(\xi_x^\perp)$; hence, by $C^1$ regularity of $c$, there exists $\delta_x>0$ so that $-c_x(z,\Lambda) \subset C_{\omega_x}(\xi_x^\perp)$ for all $z \in B_{\delta_x}(x)$; recall that for $z \in \partial U_m \cap B_{\delta_x}(x)$,
\begin{equation} \label{cnnq}
\big (z+C_{\alpha}(\nu_z^\perp) \big) \cap B_{\delta}(z) \subset (U_m \cap \Omega) \cap B_{\delta}(z),
\end{equation}
with $\nu_z \in C_{\omega_x}(\xi_x^\perp)$ $\big($since $\nu_z= -\frac{c_x(z, T_m(z))}{|c_x(z, T_m(z))|}$ and $-c_x(z,\Lambda) \subset C_{\omega_x}(\xi_x^\perp)$ $\big)$; now the angle between $\nu_z$ and $\nu$ is strictly less than $\frac{\pi}{2}$ due to the fact that the opening of $C_{\omega_x}(\xi_x^\perp)$ is strictly less than $\pi$. Thus, thanks to Remark \ref{ya}, we may assume $\xi_x \in C_{\alpha}(\nu_z^\perp)$ (by picking $\alpha>0$ sufficiently small). Hence, there exists $\alpha_x>\alpha$ so that (see Figure \ref{inter}) $$C_{\alpha_x}(\xi_x^\perp) \subset \bigcap_{z \in \partial U_m \cap B_{\delta_x}(x)} C_{\alpha}(\nu_z^\perp).$$ 
Combining this information with (\ref{cnnq}) yields that for all $z \in \partial U_m \cap B_{\delta_x}(x)$,
\begin{equation} \label{cnn}
\big (z+C_{\alpha_x}(\xi_x^\perp) \big) \cap B_{\delta_x}(x) \subset (U_m \cap \Omega) \cap B_{\delta_x}(x)
\end{equation} 
(we may assume $B_{\delta_x}(x) \subset B_\delta(z)$ by choosing $\delta_x$ sufficiently small relative to $\delta$). In fact, by possibly taking $\alpha_x$ larger and $\delta_x$ smaller, (\ref{cnn}) holds for all $z \in  \overline U_m \cap B_{\delta_x}(x)$: indeed, let $z \in  U_m \cap B_{\delta_x}(x).$ Then $(z,T_m(z)) \in \gamma_m$ and thanks to (\ref{yeah}), $z \in \partial E_{T_m(z)}^b$ with $b=c(z,T_m(z))>0$. Remark \ref{dir} implies the existence of $\nu_z=-\frac{c_x(z, T_m(z))}{|c_x(z, T_m(z))|} \in \mathbb{S}^{n-1}$ so that $$(y+C_\alpha(\nu_z^\perp)) \cap B_\delta(z) \subset E_{T_m(z)}^b \cap \Omega \subset U_m \cap \Omega,$$ for all $y \in  \overline{E_{T_m(z)}^b} \cap B_\delta(x)$. In particular, $$(z+C_\alpha(\nu_z^\perp)) \cap B_\delta(z)  \subset U_m \cap \Omega.$$ Thus, by possibly taking $\delta_x$ smaller, if necessary, we may assume $\overline B_{\delta_x}(x) \subset B_\delta(z)$, and if $z$ is close enough to $x$ we also have $\nu_z \in C_{\omega_x}(\xi_x^\perp)$; thus, repeating the argument above from (\ref{cnnq}) to (\ref{cnn}) yields the result. Therefore, we proved the existence of $\delta_x>0$, $\alpha_x>0$, and $\xi_x \in \mathbb{S}^{n-1}$ so that for all $z \in \overline U_m \cap \overline B_{\delta_x}(x)$, 
\begin{equation} \label{ra}
(z+C_{\alpha_x}(\xi_x^\perp))\cap B_{\delta_x}(x) \subset U_m \cap \Omega.
\end{equation}
\begin{figure}[h!]
\centering 
\includegraphics[scale= .5]{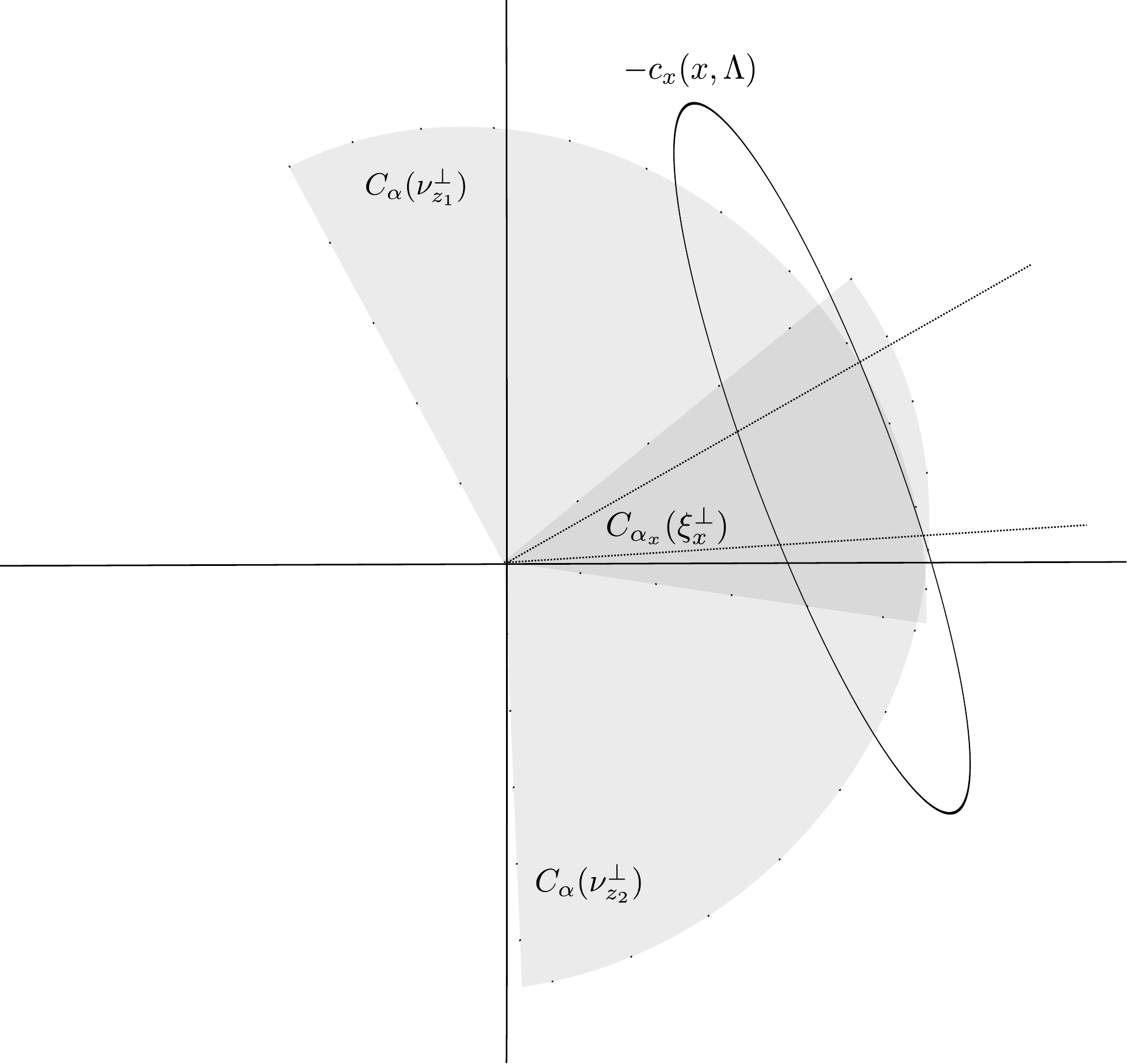}
\caption{$C_{\alpha_x}(\xi_x^\perp) \subset \bigcap_{z \in \partial U_m \cap B_{\delta_x}(x)} C_{\alpha}(\nu_z^\perp).$}
\label{inter}
\end{figure} 

\noindent By rotating and translating the coordinate system, we may assume $x=0$, $\xi_x = -e_n$, and $\pi := \xi_x^\perp = \mathbb{R}^{n-1}$; moreover, note that the cone $C_{\alpha_0}(\pi)$ is symmetric with respect to the $e_n$ axis. Define $\phi: \tilde B_{\eta_0}(0) \subset \mathbb{R}^{n-1} \rightarrow \mathbb{R}$ by $$\phi(z'):= \sup_{y:=(y',y_n) \in \partial U_m \cap \overline{B_{\eta_0}(0)}} K_y(z'),$$ where $\tilde B_{d\eta_0}(0):=Proj_\pi(B_{\eta_0}(0))$ and $K_y$ is the cone function at the point $y$ on the free boundary generated by $C_{\alpha_0}(\pi)$. Note that $\phi$ is Lipschitz since it is the supremum of Lipschitz functions with bounded Lipschitz constant (depending on the opening of the cones). 
Moreover, by construction we have 
\begin{equation} \label{cl0}
\partial U_m \cap B_{\eta_0}(0) \subset \operatorname{graph} \phi |_{\tilde B_{\eta_0}(0)}.
\end{equation}
Now we claim that there exist constants $d,\tilde d \in (0,1)$ with $d$ depending on the profile of the level sets of the cost function, so that 
\begin{equation} \label{cl}
\operatorname{graph} \phi |_{\tilde B_{d\eta_0}(0)} \subset \partial U_m \cap \overline{B_{\tilde d\eta_0}(0)}.
\end{equation}
Indeed, pick any $\tilde d \in (0,1)$; we may select a constant $d=d(\tilde d, \alpha_0)>0$ small enough, so that the graph of $\phi(\tilde B_{d\eta_0}(0))$ is contained in $B_{\tilde d \eta_0}(0)$ (this is possible, since $\phi$ has a uniform Lipschitz constant in $B_{\eta_0}(0)$ which depends only on the profile of the level sets).
Let $y \in \operatorname{graph} \phi |_{\tilde B_{d\eta_0}(0)}\subset B_{\tilde d \eta_0}(0)$. If $y \notin \partial U_m \cap \overline{B_{\tilde d\eta_0}(0)}$, then since $y$ is on an open cone with opening inward to $U_m \cap \Omega$, it follows that $y \in U_m \cap \Omega$. Since $\partial U_m \cap \overline B_{\tilde d\eta_0(0)}$ is compact,  for $\theta>0$ small, it follows that $Q_\theta(y) \cap \partial U_m \cap \overline B_{\tilde d\eta_0}  = \emptyset$, where $Q_\theta=Q_\theta(y)$ is a small cylinder whose interior is centered at $y$ and whose base diameter and height is equal to $\theta$; in particular, $Q_\theta \cap \operatorname{graph} \phi |_{\tilde B_{d\eta_0}(0)}$ does not contain any free boundary points. Next we consider a general fact: let $w \in \operatorname{graph} \phi |_{\tilde B_{\eta_0}(0)} \setminus \partial U_m$, $L_t(w) := w+te_n$, and $$s(w):= \sup_{\{t\geq0: L_t(w) \in U_m \cap \Omega\}} t;$$ 
note that since $w \in \operatorname{graph} \phi |_{\tilde B_{\eta_0}(0)}$,
\begin{equation} \label{le}
s(w)\geq \tilde s(w):=\sup_{\{t\geq0: L_t(w) \in B_{\eta_0}(0)\}} t,
\end{equation}
(otherwise it would contradict the definition of $\phi$ as a suprema of cones in $B_{\eta_0}(0)$ and $w$ as a point on the graph of $\phi$). 
Next, keeping the base fixed, we enlarge the height of the cylinder along the $\{y+te_n: t\in \mathbb{R}\}$ axis in a symmetric way (with respect to the plane $y_n+\pi = \mathbb{R}^{n-1}$) so that it surpasses $4\eta_0$; we denote the resulting cylinder by $\tilde Q_\theta$. By (\ref{le}) we have $\tilde Q_\theta \cap B_{\eta_0} \subset U_m \cap \Omega$. Then we increase its base diameter, $\theta$, until the first time when $\tilde Q_\theta$ hits the free boundary $\partial U_m \cap \Omega$ inside $B_{\eta_0}(0)$, and denote the time of first contact by $\theta$ and a resulting point of contact by $y_\theta$ (note that since $0 \in \partial U_m \cap B_{\eta_0}(0),$ this quantity is well defined). 
Since $\phi$ is a continuous graph in $B_{\eta_0}(0)$, and both $y$ and $y_\theta$ are on the graph, we may select a sequence of points $y_k \in \operatorname{graph} \phi |_{\tilde B_{\eta_0}(0)} \cap \tilde Q_\theta$ such that $y_k \rightarrow y_\theta$ (by connectedness of $\operatorname{graph} \phi |_{\tilde B_{\eta_0}(0)} \cap \tilde Q_\theta$). Since $y_\theta \in B_{\eta_0}(0)$ is an interior point, for $k$ sufficiently large we will have $y_k \in B_{\eta_0}(0) \cap \tilde Q_\theta$, see Figure \ref{cone_pic}. Thus, by definition of $\theta$, we will have that the $y_k$ are not free boundary points but on the graph of $\phi$; thus, by (\ref{le}), $s(y_k) \geq \tilde s(y_k)$, and this implies $\tilde y_k:= y_k+\tilde s(y_k)e_n \in \partial B_{\eta_0}(0) \cap \overline U_m$. By (\ref{ra}) we have $$(\tilde y_k+C_{\alpha_0}(\pi))\cap B_{\eta_0}(0) \subset U_m \cap \Omega.$$ However, for large $k$, $y_\theta \in (\tilde y_k+C_{\alpha_0}(\pi))$ (see Figure \ref{cone_pic}) and this contradicts that $y_\theta$ is a free boundary point, thereby establishing (\ref{cl}). Thus, combining (\ref{cl0}) and (\ref{cl}) we obtain that in a neighborhood around the origin, the free boundary is the graph of the Lipschitz function $\phi$; hence, the normal to the graph exists for $\mathcal{H}^{n-1}$ a.e. $z' \in \tilde B_{\eta_0}(0)$ and has the representation $\frac{(D\phi(z'), -1)}{\sqrt{1+|D\phi(z')|^2}}$ at a point $(z',\phi(z'))$ where it exists. 
\begin{figure}[h!]
\centering 
\includegraphics[scale= .5]{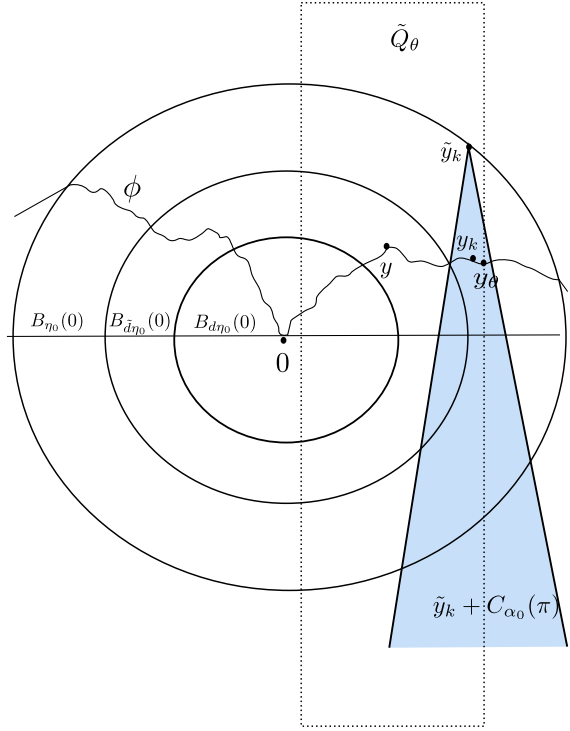}
\caption{$y_\theta \in U_m \cap \Omega$.}
\label{cone_pic}
\end{figure}   
  
\end{proof}

\noindent Now we are ready to apply our Lipschitz result to solve a problem mentioned in \cite{CM}:

\begin{proof}[\bf{Proof of Corollary \ref{ath}}]
First, note that by \cite[Remark 2.11]{AFi}, there exists a unique solution to the optimal partial transport problem, and it is denoted by the transference plan $\gamma_m$. Moreover, by the results of \cite[Section 2]{AFi}, this solution has the form $$\gamma_m:=(Id \times T_m)_{\#}f_m=(T_m^{-1} \times Id)_{\#} g_m,$$ for some invertible map $T_m$, where $f_m$ and $g_m$ are the first and second marginals of $\gamma_m$, respectively. Now $T_m$ is constructed by solving the classical optimal transport problem between the densities $f+(g-g_m)$, $g+(f-f_m)$. Indeed, if we denote this solution by $\gamma$, then since $c \in \mathcal{F}_0$, by applying the classical theory, we know $\gamma$ is supported on the graph of a function which is precisely $T_m$. Moreover, by \cite[Proposition 2.4]{AFi} and \cite[Remark 2.5]{AFi}, it follows that 
\begin{equation} \label{one}
\gamma = \gamma_m + (Id \times Id)_\#((f-f_m)+(g-g_m)),
\end{equation}
and there exists a potential function $\Psi_m$ which satisfies 
\begin{equation} \label{tran}
\nabla_x c(x,T_m(x))=\nabla \Psi_m(x),
\end{equation}
in an almost everywhere sense. Now by \cite[Theorem 2.6 and Remark 2.11]{AFi}, we have $(T_{m})_{\#}(f_m+(g-g_m))= g$ (i.e. $T_m$ will not move the points in the inactive region). Let $f':=f_m+(g-g_m)$ and note that thanks to our assumptions on $f$ and $g$, $$|det(D_{xy}^2 c)| \frac{f'}{g(T_m)} \in L^p(U_m \cap \Omega).$$ Thus, we may apply \cite[Theorem 1]{Li} to obtain 
\begin{equation} \label{hold}
\Psi_m \in C^{1,\alpha}(\overline{U_m \cap \Omega}),
\end{equation}
Now thanks to Theorem \ref{lip}, we know that the free boundary is locally a Lipschitz graph, with $\mathcal{H}^{n-1}$ a.e. defined normal $\nu_z= -\frac{c_x(z, T_m(z))}{|c_x(z, T_m(z))|}$. Thus by combining (\ref{tran}) and (\ref{hold}), we readily obtain the result.      

\end{proof}

In fact, one may also use Theorem \ref{lip} to prove a semiconvexity result. Moreover, the disjointness assumption may also be weakened.  

\begin{cor} \label{semi} (Semiconvexity) 
Let $f=f\chi_\Omega$ and $g=g\chi_\Lambda$ be a non-negative integrable functions. Assume $\overline{\Omega} \cap \overline{\Lambda}=\emptyset$ and that $\Lambda$ is bounded and $c$-convex with respect to $\Omega$. If $c \in \mathcal{F}$, then $\partial U_m \cap \Omega$ is locally semiconvex. 
\end{cor}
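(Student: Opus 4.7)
The plan is to refine the argument of Theorem \ref{lip} by replacing the uniform cone condition of Lemma \ref{cone2} with the stronger uniform interior ball condition provided by Lemma \ref{l5}. Under the hypotheses $c \in \mathcal{F}$ and $\overline{\Omega} \cap \overline{\Lambda} = \emptyset$, a localization near the free boundary point of interest (permissible since semiconvexity is a local property) yields $b_0>0$ in (\ref{l4}) from continuity of $c$ and its vanishing only on the diagonal, while Lemma \ref{conddd} then supplies $b_1>0$. Hence both Theorem \ref{lip} and Lemma \ref{l5} apply: the free boundary $\partial U_m \cap \Omega$ is locally a Lipschitz graph, and each sublevel set $E_{\bar y}^{c(\bar x,\bar y)}$ appearing in the representation
\[
U_m \cap \Omega = \bigcup_{(\bar x, \bar y) \in \gamma_m} E_{\bar y}^{c(\bar x, \bar y)}
\]
from \cite[Corollary 2.4]{CM} satisfies a uniform interior ball condition with the same radius $r=r(b_0,b_1,\|c\|_{C^2})>0$.

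The first substantive step is to propagate this uniform interior ball property to the union itself. Fix $x \in \partial U_m \cap \Omega$, take $x_k \in U_m \cap \Omega$ with $x_k \to x$, and use the characterization to select $(\bar x_k, \bar y_k) \in \operatorname{spt}(\gamma_m) \subset \overline\Omega \times \overline\Lambda$ with $c(x_k,\bar y_k) < c(\bar x_k,\bar y_k)$. Compactness of $\overline\Lambda$ gives a convergent subsequence of $\{\bar y_k\}$; the graph structure of $\operatorname{spt}(\gamma_m)$ over $\overline{U_m}$ together with the localization confines $\{\bar x_k\}$ to a compact set as well. Passing to a subsequence yields $(\bar x_\infty,\bar y_\infty) \in \operatorname{spt}(\gamma_m)$; by continuity of $c$, $c(x,\bar y_\infty) \leq c(\bar x_\infty,\bar y_\infty)$, while $x \notin U_m$ combined with the union representation forces equality, so $x \in \partial E_{\bar y_\infty}^{c(\bar x_\infty,\bar y_\infty)}$. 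Lemma \ref{l5} then furnishes a ball of radius $r$ in $E_{\bar y_\infty}^{c(\bar x_\infty,\bar y_\infty)} \subset U_m \cap \Omega$ tangent to $\partial U_m \cap \Omega$ at $x$, establishing a uniform interior ball condition for $U_m \cap \Omega$.

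To conclude semiconvexity, I would work in the coordinates of the proof of Theorem \ref{lip}: the free boundary is locally the graph $y_n=\phi(y')$ of a Lipschitz function $\phi$, with $U_m \cap \Omega$ on the side $\{y_n < \phi(y')\}$ (since the cone axis $\xi_x = -e_n$ points into $U_m$). At each $(y'_0,\phi(y'_0))$, the interior ball of radius $r$ sitting below the graph and tangent to it at this point provides a smooth one-sided lower bound
\[
\phi(y') \geq \phi(y'_0) + p \cdot (y'-y'_0) - \tfrac{C}{2}|y'-y'_0|^2 + o(|y'-y'_0|^2),
\]
for an appropriate $p$, with $C$ depending only on $r$ and the Lipschitz norm of $\phi$. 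Uniformity in $y'_0$ is precisely the definition of local semiconvexity of $\phi$, hence of $\partial U_m \cap \Omega$ as a hypersurface.

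The principal obstacle is the compactness step above: because $\Omega$ is not assumed bounded, the representatives $(\bar x_k,\bar y_k)$ in the characterization must be confined to a compact subset of $\operatorname{spt}(\gamma_m)$ before passing to the limit. Using that semiconvexity is local (so $\Omega$ may be replaced by a bounded open neighborhood of $x$) combined with the graph structure of $\operatorname{spt}(\gamma_m)$ (so that $\bar x_k$ can be chosen close to $x_k$) is the key, but this is the step that requires the most care.
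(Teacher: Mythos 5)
Your proposal follows essentially the same route as the paper's (very short) proof: Theorem \ref{lip} gives the local Lipschitz graph, Lemma \ref{l5} gives the uniform interior ball with direction $\nu_x=-\frac{c_x(x,T_m(x))}{|c_x(x,T_m(x))|}$ (Remark \ref{dir}), and the free boundary is then locally a supremum of spherical caps, which yields semiconvexity -- the paper cites \cite[Section 5]{CM} for this last step, which is exactly your uniform lower paraboloid bound; your localization to a bounded neighborhood to secure (\ref{l4}) and then (\ref{eee1}) via Lemma \ref{conddd} is a point where you are in fact more careful than the paper. The one place you deviate is the limiting argument used to produce a tangent interior ball at a free boundary point, and that is both the weakest link and unnecessary. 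As stated, the confinement of $\{\bar x_k\}$ to a compact set is not justified: the union representation (\ref{yeah}) only provides \emph{some} pair $(\bar x_k,\bar y_k)$ in the support with $c(x_k,\bar y_k)<c(\bar x_k,\bar y_k)$, and nothing lets you choose $\bar x_k$ close to $x_k$. The paper's proof (as in the proof of Theorem \ref{lip}) bypasses this entirely: since $\gamma_m$ is supported on the graph of $T_m$ over $\overline{U_m}$, the free boundary point $x$ itself pairs with some $T_m(x)\in\overline{\Lambda}$, the set $E_{T_m(x)}^{c(x,T_m(x))}=\{z\in\Omega:\ c(z,T_m(x))<c(x,T_m(x))\}$ lies in $U_m\cap\Omega$ by (\ref{yeah}) and has $x$ on its boundary, and Lemma \ref{l5} then gives the tangent ball at $x$ directly, with no limit taken. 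Alternatively, if you keep your limiting argument, compactness of $\bar x_k$ is not needed at all: only $\bar y_k\in\overline{\Lambda}$ must converge, because the radius in Lemma \ref{l5} is uniform over all levels $b\geq b_0$, and for any $w\in\Omega$ with $c(w,\bar y_\infty)<c(x,\bar y_\infty)=\lim_k c(x_k,\bar y_k)\leq\liminf_k c(\bar x_k,\bar y_k)$ one gets $c(w,\bar y_k)<c(\bar x_k,\bar y_k)$ for large $k$, hence $w\in U_m\cap\Omega$; together with (\ref{eee1}) this puts $x$ on the boundary of a sublevel set contained in $U_m\cap\Omega$ and the argument proceeds as you intend.
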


\begin{proof}
By Theorem \ref{lip}, it follows that $\partial U_m \cap \Omega$ is locally a Lipschitz graph, and Lemma \ref{l5} implies a uniform interior ball condition with $\nu_x= -\frac{c_x(x, T_m(x))}{|c_x(x, T_m(x))|}$ as the direction of the ball (c.f. Remark \ref{dir}). Thus, locally, the free boundary may be represented as a suprema of spherical caps (see \cite[Section 5]{CM}), and this readily implies semiconvexity.  
\end{proof}

\begin{cor} (Non-disjoint case) \label{co1}
Let $f=f\chi_\Omega \in L^p(\mathbb{R}^n)$ be a non-negative function with $p \in (\frac{n+1}{2},\infty]$, and $g=g\chi_\Lambda$ a positive function bounded away from zero. Moreover, assume that $\Omega$ and $\Lambda$ are bounded and $\Lambda$ is relatively $c$-convex with respect to a neighborhood of $\Omega \cup \Lambda.$ Let $c \in \mathcal{F}_0$ and $m \in \big(||f \wedge g||_{L^1}, \min\{||f||_{L^1},||g||_{L^1}\} \big]$. Then away from $\partial(\Omega \cap \Lambda)$, it follows that $\partial U_m \cap \Omega$ is locally a $C^{1,\alpha}$ graph, where $\partial U_m \cap \Omega$ is the free boundary arising in the optimal partial transport problem and $\alpha=\frac{2p-n-1}{2p(2n-1)-n+1}$. 
\end{cor}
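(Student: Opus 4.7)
The plan is to employ Figalli's localization argument \cite[Theorem 4.11]{AFi} to reduce matters to the disjoint case covered by Corollary \ref{ath}. Fix $x_0 \in \partial U_m \cap \Omega$ with $x_0 \notin \partial(\Omega \cap \Lambda)$. I would first rule out the possibility $x_0 \in \operatorname{int}(\Omega \cap \Lambda)$: since $m > ||f \wedge g||_{L^1}$, the common mass is transported by the identity component of $\gamma_m$ thanks to \cite[Proposition 2.4 and Remark 2.5]{AFi}, so $\operatorname{spt}(f \wedge g) \subset \overline{U_m}$. Combined with the positive lower bound on $g$ on $\Lambda$, any interior point of $\Omega \cap \Lambda$ must be either in $\operatorname{int} U_m$ (when $f$ is locally positive) or in the exterior of $\operatorname{spt}(f_m)$ (when $f$ vanishes nearby), never on $\partial U_m$. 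Hence $x_0 \in \Omega \setminus \overline{\Lambda}$ and we may fix $r>0$ with $\overline{B_r(x_0)} \cap \overline{\Lambda} = \emptyset$.

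Next I would follow the classical-OT reformulation of \cite[Section 2]{AFi}, in which the partial transport problem is encoded by the optimal transport problem between $f + (g - g_m)$ and $g + (f - f_m)$. Restricting attention to $B_r(x_0) \times \overline{\Lambda}$ produces a genuine optimal transport problem whose source is supported in $\overline{B_r(x_0)}$ and whose target is supported in $\overline{\Lambda}$, hence with disjoint closures. Up to shrinking $r$, the continuity of the transport map $T_m$ at interior points of $U_m$ (afforded by the $C^{1,\alpha}$ regularity of the potential via \cite[Theorem 1]{Li}) ensures the image of $B_r(x_0) \cap U_m$ lands in a region of $\Lambda$ where the effective target density keeps a positive lower bound. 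The source density is $f \in L^p$ and the cost is still in $\mathcal{F}_0$; the relative $c$-convexity of $\Lambda$ with respect to a neighborhood of $\Omega \cup \Lambda$ descends to the corresponding property for the localized target because the restricted source and target supports remain contained in that neighborhood. Corollary \ref{ath} then delivers local $C^{1,\alpha}$ regularity at $x_0$ with exponent $\alpha = \frac{2p-n-1}{2p(2n-1)-n+1}$.

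The main obstacle will be verifying the localization step rigorously, specifically that after restriction the effective target density retains a positive lower bound and that the free boundary for the localized classical OT problem coincides with the original free boundary inside $B_r(x_0)$. The first point requires combining continuity of the optimal map at interior points of $U_m$ with $c$-cyclical monotonicity to ensure the image of a small source neighborhood avoids the degenerate region where the excess target density $g - f \wedge g$ vanishes; it is here that both the $\mathcal{F}_0$ structure of the cost and the slightly strengthened $c$-convexity hypothesis (over a neighborhood of $\Omega \cup \Lambda$, rather than $\Omega \cup \Lambda$ itself) come into play. The second point rests on the uniqueness of optimal plans (\cite[Remark 2.11]{AFi}) together with the identity-on-common-mass structure of $\gamma_m$, which together guarantee that restricting and extending the plan does not alter the geometry of the free boundary near $x_0$.
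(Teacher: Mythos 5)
Your overall instinct---localize at a free boundary point $x_0\notin\partial(\Omega\cap\Lambda)$, where the relevant piece of the active region is separated from $\overline{\Lambda}$, and reduce to the disjoint situation---is the same as the paper's, but your implementation leaves the crux unproven. You reduce to Corollary \ref{ath} by restricting the transport problem to $B_r(x_0)\times\overline{\Lambda}$, and then you must show (as you yourself flag) that the restricted problem is a partial transport problem of the kind covered by Corollary \ref{ath}, that its free boundary coincides with $\partial U_m\cap\Omega$ inside $B_r(x_0)$, and that its target density keeps a positive lower bound. None of this is carried out: the restriction of an optimal plan is optimal for its own marginals, but it is not a partial transport problem with prescribed data $(f,g,m)$, so Corollary \ref{ath} cannot be invoked as a black box; and your route to the density bound (continuity of $T_m$ plus $c$-cyclical monotonicity to avoid the set where $g-f\wedge g$ degenerates) is both unresolved and circular in spirit, since the continuity of $T_m$ you invoke comes from \cite[Theorem 1]{Li}, i.e., from the very regularity machinery you are trying to localize. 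In addition, your dichotomy for excluding free boundary points in the interior of $\Omega\cap\Lambda$ (``$f$ locally positive'' versus ``$f$ vanishes nearby'') is not exhaustive; the paper simply quotes \cite[Remarks 3.2 and 3.3]{AFi}, which give $\Omega\cap\Lambda\subset U_m\cap\Omega$.

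The paper's proof is shorter because it does not restrict the transport problem at all: it observes that disjointness enters the proof of Corollary \ref{ath} only through the uniform interior cone/ball condition for the level sets $\{c(\cdot,y)<b\}$, $y\in\overline{\Lambda}$, used in the Lipschitz argument of Theorem \ref{lip}. Once $x_0$ is away from $\partial(\Omega\cap\Lambda)$, one picks $r_{x_0}>0$ with $B_{r_{x_0}}(x_0)\cap(U_m\cap\Omega)$ disjoint from $\overline{\Lambda}$; since $T_m$ maps into $\overline{\Lambda}$, this localized set has positive distance to its image, so Lemma \ref{conddd} and Lemma \ref{l5} (or the cone condition of Lemma \ref{cone2}) apply locally and the cone argument goes through near $x_0$. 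All remaining ingredients---Figalli's structure theory ($\gamma_m$ supported on the graph of $T_m$, $\nabla_x c(x,T_m(x))=\nabla\Psi_m(x)$, and $(T_m)_{\#}\bigl(f_m+(g-g_m)\bigr)=g$) and Liu's $C^{1,\alpha}$ estimate for $\Psi_m$---are valid without any disjointness assumption, the strengthened hypothesis of relative $c$-convexity with respect to a neighborhood of $\Omega\cup\Lambda$ being what is needed there. In particular, the target density relevant for \cite[Theorem 1]{Li} is $g$ itself, which is bounded away from zero by hypothesis, so the ``effective target density'' issue you identify as the main obstacle simply does not arise in the paper's framework.
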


\begin{proof}
Note that by \cite[Remark 3.2]{AFi} and \cite[Remark 3.3]{AFi}, we have $\Omega \cap \Lambda \subset U_m \cap \Omega$. Therefore, the free boundary does not enter the common region $\Omega \cap \Lambda$. Now let $x \in (\partial U_m \cap \Omega) \setminus \partial (\Omega \cap \Lambda)$. Choose $r_x>0$ so that $B_{r_x}(x) \cap (U_m \cap \Omega)$ does not intersect $\overline{\Lambda}$. Thus, $dist\big(B_{r_x}(x) \cap (U_m \cap \Omega),T_m(B_{r_x}(x) \cap (U_m \cap \Omega)\big)>0,$ and so we may apply Lemma \ref{l5} to obtain that all level sets of the cost function $c \in \mathcal{F}_0$ intersecting, say, $B_{\frac{r_x}{2}}(x) \cap (U_m \cap \Omega)$ have a uniform interior ball condition (in fact, a uniform interior cone condition is sufficient). Since all of Figalli's results used in the proof of Corollary \ref{ath} are also valid in the non-disjoint case, by localizing the problem in this way, we may proceed as in the proof of Corollary \ref{ath} to obtain the result.       
\end{proof}

\begin{rem}
By a localization argument, one may remove the disjointness assumption in Corollary \ref{semi}. Indeed, the precise statement (and proof) is similar to Corollary \ref{co1}.    
\end{rem}

\begin{rem} (Exchange symmetry) \label{w1}
By reverse symmetry, we may interchange the roles of $f$ and $g$ in Theorem \ref{ath} in order to obtain $C_{loc}^{1,\alpha}$ regularity of $\partial V_m \cap \Lambda$.    
\end{rem}

\begin{rem} (Geometry of $c$-convex domains)
For a geometric description of $c$-convex domains, see \cite[Section 2.1-2.3]{TW}. For example, based on a calculation therein, one can prove the following: suppose $\Lambda$ is a bounded, open convex set with smooth boundary and $\Omega \subset \mathbb{R}^n$. Let $$a_1:= \inf_{x \in \Omega \cup \Lambda, y \in \Lambda} |\det c_{x,y}(x,y)|>0$$ and $a_2:=||c(\cdot, \cdot)||_{C^3}$; for a fixed $x \in \Omega \cup \Lambda$, if the principal curvatures of $\partial \Lambda$ are greater than $\frac{a_2^n}{a_1}$, then $c_x(x,\Lambda)$ is convex.      
\end{rem}

Finally, we show that one may obtain a rectifiability result under only a locally Lipschitz assumption on the cost function.  

\begin{prop} \label{rect} (Rectifiability) 
Let $f=f\chi_\Omega,$ $g=g\chi_\Lambda$ be non-negative integrable functions and $m \in \big(0,\min\{||f||_{L^1},||g||_{L^1}\}\big]$. If $c:\mathbb{R}^n \times \mathbb{R}^n \rightarrow \mathbb{R}^+$ is locally Lipschitz in the $x$ variable, and 
\begin{equation} \label{alm}
0 \notin \partial_x c,
\end{equation}
where $\partial_x c$ is the Clarke subdifferential of $c$, then the free boundary arising in the optimal partial transport problem is $(n-1)$-rectifiable.
\end{prop}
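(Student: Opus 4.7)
The plan is to mirror the cone construction in the proof of Theorem \ref{lip}, with the Clarke subdifferential playing the role of $\nabla_x c$, and to cover the free boundary by countably many Lipschitz graphs over $(n-1)$-planes, from which $(n-1)$-rectifiability follows.

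First I invoke \cite[Corollary 2.4]{CM} together with \cite[Lemma 2.3]{CM} to write, for each $x_0 \in \partial U_m \cap \Omega$, $x_0 \in \partial E_{y_0}^{b_0}$ where $y_0 = T_m(x_0) \in \overline\Lambda$ and $b_0 = c(x_0, y_0)$. Since $\partial_x c(x_0, y_0)$ is a nonempty convex compact set not containing the origin, there exist $\nu_0 \in \mathbb{S}^{n-1}$ and $\epsilon_0 > 0$ such that $\langle \zeta, \nu_0\rangle \geq 2\epsilon_0$ for every $\zeta \in \partial_x c(x_0, y_0)$. The upper semicontinuity of the Clarke subdifferential in both arguments propagates this separation to a neighborhood $U_0 \times V_0$ of $(x_0, y_0)$ on which $\langle \zeta, \nu_0\rangle \geq \epsilon_0$ for every $\zeta \in \partial_x c(x, y)$.

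Next, applying the Lebourg mean value theorem to $c(\cdot, y)$, for any direction $h \in \mathbb{S}^{n-1}$ within distance $\epsilon_0/(2L)$ of $-\nu_0$ (with $L$ a local Lipschitz constant of $c$ on $U_0 \times V_0$), one obtains $c(x + th, y) - c(x, y) \leq -\epsilon_0 t/2$ for small $t > 0$ and all $(x, y) \in U_0 \times V_0$. This yields a uniform interior cone condition of opening $\alpha = \alpha(\epsilon_0, L) > 0$ and direction $-\nu_0$ for every sublevel set $E_y^b$ at boundary points in $U_0$. Combined with the representation \eqref{yeah} of $U_m \cap \Omega$, this gives that every $z \in \partial U_m \cap \Omega \cap U_0$ with $T_m(z) \in V_0$ admits a cone $(z + C_\alpha(\nu_0^\perp)) \cap B_\delta(z) \subset U_m \cap \Omega$ for some uniform $\delta > 0$.

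Since $\partial U_m \cap \Omega$ is $\sigma$-compact, a standard covering argument produces a countable partition $\{\Gamma_i\}$ of the free boundary with each $\Gamma_i$ contained in one of the neighborhoods above, associated to a direction $\nu_i \in \mathbb{S}^{n-1}$, opening $\alpha_i > 0$, and radius $\delta_i > 0$; after further refinement I may assume $\operatorname{diam}(\Gamma_i) < \delta_i$. For any $z, z' \in \Gamma_i$, neither $z' - z$ nor $z - z'$ can lie in $C_{\alpha_i}(\nu_i^\perp)$ (otherwise one of the two points would lie in $U_m \cap \Omega$), and unpacking the definition of $C_{\alpha_i}$ yields $|\langle z - z', \nu_i\rangle| \leq \alpha_i\,|P_{\nu_i^\perp}(z - z')|$. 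Therefore $\Gamma_i$ is contained in the graph of an $\alpha_i$-Lipschitz function over an $(n-1)$-plane orthogonal to $\nu_i$, and a countable union of such graphs is $(n-1)$-rectifiable. The main technical obstacle is extracting a quantitative, uniform interior cone condition (with a single direction and a positive opening valid on a whole neighborhood in $(x,y)$) from the merely qualitative hypothesis $0 \notin \partial_x c$; this requires the simultaneous use of the separation, the upper semicontinuity of $\partial_x c$, and the Lebourg mean value theorem within the framework of Clarke calculus.
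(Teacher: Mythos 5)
Your overall architecture (cone condition at free boundary points coming from the representation \eqref{yeah}, then a countable decomposition of $\partial U_m\cap\Omega$ into pieces each lying on a Lipschitz graph) is the same as the paper's, and several of your steps are sound: the separation of the compact convex set $\partial_x c(x_0,y_0)$ from the origin, the Lebourg mean value theorem applied to $c(\cdot,y)$ for a \emph{fixed} $y$, the observation that a common cone direction on a small-diameter piece forces that piece onto a Lipschitz graph, and the Lindel\"of-type covering. However, there is a genuine gap at the step you yourself flag as the technical heart: you assert ``upper semicontinuity of the Clarke subdifferential in both arguments'' to propagate the separation $\langle\zeta,\nu_0\rangle\geq\epsilon_0$ to a full neighborhood $U_0\times V_0$ in $(x,y)$, and you also invoke a Lipschitz constant $L$ of $c$ on $U_0\times V_0$. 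Neither is available under the stated hypotheses: the proposition assumes only that $c$ is locally Lipschitz \emph{in the $x$ variable}, with no modulus of continuity in $y$, so both the Lipschitz constant of $c(\cdot,y)$ and the set $\partial_x c(x,y)$ can vary wildly as $y$ varies. Moreover, even if one strengthens the hypothesis to joint local Lipschitz continuity, the \emph{partial} Clarke subdifferential map $(x,y)\mapsto\partial_x c(x,y)$ is not upper semicontinuous in general (the standard u.s.c. result is for the full subdifferential of a fixed locally Lipschitz function in its own variable; e.g. for $c(x,y)=\max\bigl(-x,\min(x,2|y|-x)\bigr)$ one has $\partial_x c(t,y)=\{+1\}$ at points $(|y|/2,y)\to(0,0)$ while $\partial_x c(0,0)=\{-1\}$). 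Since the single direction $\nu_0$ valid on a whole neighborhood is exactly what makes each of your pieces $\Gamma_i$ a graph, this is not a cosmetic omission.

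The repair is essentially what the paper does. Working with $y=T_m(z)$ fixed, upper semicontinuity of $x\mapsto\partial_x c(x,y)$ (which \emph{does} hold) together with your Lebourg argument gives, at each free boundary point $z$, a cone with its own direction $\nu_z$, opening $\alpha_z$ and radius $\delta_z$ contained in $U_m\cap\Omega$ --- this is the pointwise statement \eqref{prof}, which the paper instead extracts from the nonsmooth implicit function theorem applied to the level sets of $c(\cdot,\bar y)$. One then cannot hope for local uniformity in $(x,y)$ and must decompose: bin the free boundary into countably many sets $A_{ij}$ according to quantitative bounds on the profile ($\delta_z\geq 1/j$, $\alpha_z\leq j$) and closeness of $\nu_z$ to a direction $\nu_i$ from a fine countable partition of $\mathbb{S}^{n-1}$; on each bin the cones contain a common sub-cone with fixed direction and opening, and your graph argument (or the paper's suprema-of-cones construction) applies to each $A_{ij}$, yielding $(n-1)$-rectifiability of the countable union. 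With this modification your Clarke-calculus derivation of the cones is a legitimate alternative to the paper's use of the nonsmooth implicit function theorem; as written, though, the uniform-neighborhood step does not follow from the hypotheses.
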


\begin{proof}
First, by utilizing \cite[Corollary 2.4]{CM} we obtain    
$$
U_m \cap \Omega:= \bigcup_{(\bar x, \bar y) \in \gamma_m} \{x \in \Omega: c(x,\bar y) < c(\bar x, \bar y)\}.
$$ 
Next,  since $c$ is locally Lipschitz and (\ref{alm}) holds, we may apply the nonsmooth implicit function theorem \cite[Theorem 10.50]{Vi} to deduce that for $a\geq0$ and $\bar y \in \overline{\Lambda}$, the level set $$\{x \in \mathbb{R}^n: c(x,\bar y)=a\}$$ is locally an $(n-1)$-dimensional Lipschitz graph. Now, for $x \in \partial U_m \cap \Omega$, it follows that $$x \in \partial \{x \in \Omega: c(x,\bar y) < c(\bar x, \bar y)\},$$ for some $(\bar x, \bar y) \in \gamma_m.$ Hence, there exists a profile $(\delta_x, \alpha_x)$ such that 
\begin{equation} \label{prof}
\big (x+C_{\alpha_x}(\nu_x^\perp) \big) \cap B_{\delta_x}(x) \subset (U_m \cap \Omega) \cap B_{\delta_x}(x),
\end{equation}
for some $\nu_x \in \mathbb{S}^{n-1}.$ Consider the sets $$A_{j}^x:=\Big \{ z \in (\partial U_m \cap \Omega) \cap B_{\delta_x}(x): \delta_z \geq \frac{1}{j}, \alpha_z\leq j \Big\},$$ and note that by the argument leading to (\ref{prof}), each $z \in \partial U_m \cap \Omega$ has a profile $(\delta_z,\alpha_z)$. Now for each $j \in \mathbb{N}$, we may select $\epsilon_j>0$ so that $P:=\{\nu_i\}_{i=1}^{m_{\epsilon_j}}$ is a sufficiently fine partition of $\mathbb{S}^{n-1}$ in the following sense: for each $\nu \in \mathbb{S}^{n-1}$, there exists $\nu_i \in P$ so that $|\nu-\nu_i|< \epsilon_j$, where $\epsilon_j$ is chosen so that $|\nu-\nu_i|< \epsilon_j$ implies $\nu \in C_{2j}(\nu_i^\perp)$ (note: $\epsilon_j \searrow 0$ as $j \rightarrow \infty$). Next, let $$z \in A_{ij}^x:=\Big \{ z \in A_{j}^x: |\nu_z-\nu_i|<\epsilon_j \Big\}.$$ By using $\nu_z \in C_{2j}(\nu_i^\perp)$ and $\alpha_z\leq j$, it follows that there exists $\alpha_j>0$ so that $$C_{\alpha_j}(\nu_i^\perp) \subset C_j(\nu_z^\perp) \subset C_{\alpha_z}(\nu_z^\perp).$$ Moreover, since $\delta_z \geq \frac{1}{j}$, we may select $\delta_j>0$ and combine it with (\ref{prof}) (with $x$ replaced by $z$) to deduce $$(z+C_{\alpha_j}(\nu_i^\perp)) \cap B_{\delta_j}(x) \subset (U_m \cap \Omega) \cap B_{\delta_j}(x).$$ Thanks to this cone property, it is not difficult to show that for each $i,j \in \mathbb{N}$, $A_{ij}^x$ is contained on the graph of a Lipschitz function (generated by suprema of the cones with fixed opening given by $\alpha_j$). This shows that $A_{ij}^x$ is $(n-1)$-rectifiable. Moreover, $$(\partial U_m \cap \Omega) \cap B_{\delta_x}(x)=\bigcup_{j=1}^\infty \bigcup_{i=1}^{m_{\epsilon_j}} A_{ij}^x.$$  Next, let $(\partial U_m \cap \Omega)_s:=\{x \in \partial U_m \cap \Omega: dist(x, \partial \Omega)\geq s\}.$ By compactness, there exists $\{x_k\}_{k=1}^{n(s)} \subset (\partial U_m \cap \Omega)_s \subset \partial U_m \cap \Omega$ so that $$(\partial U_m \cap \Omega)_s = \bigcup_{k=1}^{n(s)} (\partial U_m \cap \Omega)_s \cap B_{\delta_{x_k}}(x_k).$$ From what we proved, it follows that $$(\partial U_m \cap \Omega)_s = \bigcup_{k=1}^{n(s)} \bigcup_{j=1}^\infty \bigcup_{i=1}^{m_{\epsilon_j}} A_{ij}^{x_k},$$ where each $A_{ij}^{x_k}$ is $(n-1)$-rectifiable. Thus, by taking $s \rightarrow 0$, we obtain the result.    
\end{proof}


\section{Extensions to Riemannian manifolds}

In this section, we study the partial transport problem on Riemannian manifolds where the cost is taken to be the square of the Euclidian distance $d$. Indeed, existence and uniqueness of the partial transport has been established by Figalli \cite[Remark 2.11]{AFi}. Therefore, our main concern here will be the regularity of the free boundary. In view of the method developed in the previous section, we will solely focus on giving sufficient conditions for local semiconvexity of the free boundary (for definitions, etc. regarding optimal transport in the Riemannian setting, the reader may e.g. consult \cite{Vi}):

\begin{thm} \label{rsemi} (Semiconvexity)
Let $M$ be a smooth $n$ -- dimensional Riemannian manifold with Riemannian distance $d$. Consider two non-negative, integrable functions $f=f\chi_\Omega$, $g=g\chi_\Lambda$ with $\overline{\Omega} \cap \overline{\Lambda}=\emptyset$ and $cut(\Omega) \cap \Lambda =\emptyset$, where $cut(\Omega)$ is the cut locus of $\Omega$. Furthermore, assume $\Lambda$ is bounded and $\frac{d^2}{2}$ - convex with respect to $\Omega$. Then the free boundary in the partial transport problem with cost $c:=\frac{d^2}{2}$ is locally semiconvex.   
\end{thm}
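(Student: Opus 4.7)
The plan is to mimic the proof of Corollary \ref{semi} by transferring the Euclidean cone-and-ball arguments of \S 2--3 to the manifold via a local coordinate chart. The key observation is that the assumption $cut(\Omega) \cap \Lambda = \emptyset$ ensures $c = d^2/2$ is smooth on a neighborhood of $\overline{\Omega} \times \overline{\Lambda}$, and the classical relation $\nabla_x c(x,y) = -\exp_x^{-1}(y)$ (valid off the cut locus) furnishes both the left/right twist conditions and the identity $|\nabla_x c(x,y)| = d(x,y)$. Combined with $\overline{\Omega} \cap \overline{\Lambda} = \emptyset$, this yields analogues of (\ref{l4}) and (\ref{eee1}) for free, and places the pulled-back cost in the class $\mathcal{F}$.

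I would then fix $x_0 \in \partial U_m \cap \Omega$ and introduce a normal coordinate chart $\varphi: B_\rho(x_0) \to \mathbb{R}^n$ with $\rho$ small. Setting $\tilde c(x,y) := c(\varphi^{-1}(x), y)$ on $\varphi(B_\rho(x_0)) \times \overline{\Lambda}$, the pulled-back cost is $C^2$ and inherits the hypotheses of Theorem \ref{lip} and Lemma \ref{l5}. Crucially, the Riemannian $c$-convexity of $\Lambda$ with respect to $\Omega$---which reads $-\exp_x^{-1}(\Lambda) \subset T_xM$ convex---becomes Euclidean convexity of $\tilde c_x(x, \Lambda)$ once $T_xM$ is identified with $\mathbb{R}^n$ via $d\varphi(x)$. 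The Riemannian analogue of the representation (\ref{yeah}) from \cite[Corollary 2.4]{CM} follows from $c$-cyclical monotonicity just as in the Euclidean case and so carries over to the chart.

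With this setup in hand, Theorem \ref{lip} applied in the chart shows that $\varphi(\partial U_m \cap B_\rho(x_0))$ is locally a Lipschitz graph, while Lemma \ref{l5} shows that every sublevel set $\{x : \tilde c(x,y) < b\}$ with $y \in \overline \Lambda$ satisfies a uniform interior ball condition, with inward normal $\nu_z = -\tilde c_x(z, T_m(z))/|\tilde c_x(z, T_m(z))|$ at a boundary point $z$. Following the argument of Corollary \ref{semi}, the free boundary is then locally a supremum of uniformly $C^{1,1}$ spherical caps in the chart, hence semiconvex there. Since semiconvexity is stable under smooth diffeomorphisms up to a correction bounded in terms of the second derivatives of $\varphi$, this transfers back to intrinsic semiconvexity of $\partial U_m \cap \Omega$ near $x_0$.

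The main obstacle I anticipate is not conceptual but bookkeeping: ensuring that the profile constants produced by Lemma \ref{l5} and Theorem \ref{lip} remain uniform across the chart, which forces me to shrink $\rho$ so that $d\varphi$ is arbitrarily close to an isometry and the Christoffel symbols of the metric in the chart are uniformly controlled. Once this calibration is carried out, the entire Euclidean machinery from \S 2--3 applies verbatim, and no further geometric ideas are needed.
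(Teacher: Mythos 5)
Your proposal is correct and follows essentially the same route as the paper's proof: localize at a free boundary point via the exponential chart, use the cut-locus and disjointness assumptions to get smoothness of $d^2/2$ and the nondegeneracy conditions (\ref{l4})--(\ref{eee1}) (via Lemma \ref{conddd}), then apply Theorem \ref{lip} for Lipschitz regularity and Lemma \ref{l5} for the uniform interior ball, concluding semiconvexity as in Corollary \ref{semi}. The only cosmetic differences are that you pull back only the $x$-variable and make explicit the stability of semiconvexity under the chart, points the paper leaves implicit.
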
   

\begin{proof}
First, by \cite[Remark 2.11]{AFi}, the partial transport exists and classical results imply that it has the form $T_m=\exp(\nabla \Psi_m)$ for some $c$ - convex function $\Psi_m$. Next, pick a free boundary point $x \in \Omega$. Then, for $\epsilon>0$ small, $\exp_x$ may be used as a chart between $B_\epsilon(x)$ and the tangent space at $x$. Since $cut(\Omega) \cap \Lambda =\emptyset$, 
$$cut(B_\epsilon(x) \cap (\partial U_m \cap \Omega)) \cap T_m(B_\epsilon(x) \cap (\partial U_m \cap \Omega))=\emptyset;$$ thus, we may use only one chart (i.e. $\exp$) to project $$\big(B_\epsilon(x) \cap (\partial U_m \cap \Omega)\big) \cup T_m(B_\epsilon(x) \cap (\partial U_m \cap \Omega))$$ onto the tangent space at $x$. The cut locus assumption also implies that $d$ is smooth on $\Omega \times \Lambda$, and since $\overline{\Omega} \cap \overline{\Lambda}=\emptyset$, we have that it is bounded away from zero. Thus, the level sets of $c$ enjoy a uniform ball condition (see Lemma \ref{l5}). Moreover, thanks to the $c$ - convexity of $\Lambda$ with respect to $\Omega$, we may apply Theorem \ref{lip} to deduce local Lipschitz regularity of the free boundary (note that (\ref{eee1}) follows from Lemma \ref{conddd} which we can apply thanks to our cut locus assumption). The Lipschitz regularity combines with the uniform ball property of the level sets and readily yields local semiconvexity of the free boundary.   
\end{proof}

\begin{rem}
We note that one may remove the disjointness assumption in Theorem \ref{rsemi} by localizing the problem as in Corollary \ref{co1}. 
\end{rem}

If $cut(\Omega)\cap \Lambda \neq \emptyset$, then it may happen that $T_m(x) \in cut(x)$, so the proof above breaks down (since the distance function is only smooth away from the cut locus). Nevertheless, there is currently some literature available in understanding the proper conditions which ensure that this scenario does not happen. Indeed, Loeper and Villani \cite[Theorem 7.1]{LV} shed some light on this issue: given a uniformly regular manifold (see \cite[Definition 4.1]{LV}) and two densities $\mu$ and $\nu$ such that $\mu << dvol$ and $\nu(A) \geq a vol(A)$ for any Borel set $A \subset M$, there is a $\sigma>0$ depending on $\mu$ and $\nu$, so that 
$$ \inf_{x\in M} d(T(x),cut(x)) \geq \sigma.$$ It is well-known that $\mathbb{S}^{n-1}$ is a uniformly regular manifold (see \cite[Example 4.3]{LV}), so one may hope to utilize Loeper and Villani's theory to develop a full regularity theory for the partial transport first on the sphere, and then in a more general setting.    
We conclude with a family of examples which illustrate that neither the ``stay away from the cut locus" property nor the $c$ - convexity of the target with respect to the entire source are necessary conditions for semiconvexity of the free boundary. 

\begin{exmp}
In what follows, we outline a method for constructing two general densities $f=f\chi_\Omega$, $g=g\chi_\Lambda$ where $\Omega \subset \mathbb{S}^{n-1}$ and $\Lambda \subset \mathbb{S}^{n-1}$ so that $\Lambda$ is not $\frac{d^2}{2}$ - convex with respect to $\Omega$, yet the free boundary in the partial transport problem is locally semiconvex away from the common region: let $\Omega=\mathbb{S}^{n}$ and
$\Lambda$ be a small spherical cap centered around the south pole with height, say,
$\frac{1}{16}$ (measured from the south pole). It is not difficult to see that $c_x(N, \Lambda)$ is an annulus (here, $N$ is the north pole); hence, $\Lambda$ fails to be $c:=\frac{d^2}{2}$ - convex with respect to $\Omega$. Assume
$\int_{\Lambda}f$ is slightly smaller than $\int_{\Lambda}g$ (to ensure the existence of a free boundary), and the mass
$m$ transported is slightly larger than
$\int_{\Lambda}f$.
Now, enlarge $\Lambda$ to a bigger spherical cap  $\widetilde{\Omega}$
with height $\frac{1}{8}$ so that
$\int_{\widetilde{\Omega}}f>\int_{\Lambda}g+\epsilon$, where $\epsilon$ is
a small positive constant (this can be accomplished by adjusting $f$ and $g$ at the
beginning). Then, it can be shown that the spherical cap
$\Omega_{1}$ centered at the north pole with height $\frac{1}{16}$ (measured from the north pole) is
outside the active region:
if not, suppose $\int_{\Omega_{1}\cap U_{m}}f_{m}>0$, and choose a subset
$A$ of $\Omega_{1}\cap U_{m}$ so that $\int_{A}f_{m}=\delta<\epsilon$;
then choose a subset $B$ of $\widetilde{\Omega}\setminus \Lambda$ so that
$\int_{B}f=\delta$. In the original partial transport plan, $A$ is
transported to $T_{m}(A),$ and it is easy to see that the distance between
$A$ and $T_{m}(A)$ is bigger than $2-\frac{1}{8}=\frac{15}{8}$. Now we modify the original plan by replacing the mass in $A$ by the mass in $B$,
and from $B$ to $T_{m}(A)$, we can cook up a new transport map thanks to 
the mass balance condition. Next, let $2\theta$ be the largest distance on $\widetilde{\Omega}$, and note $\theta\leq \tan(\theta)=\frac{\sqrt{1-(\frac{7}{8})^{2}}}{\frac{7}{8}}=\frac{\sqrt{15}}{7}\leq
\frac{4}{7}.$ Thus, $2\theta\leq \frac{8}{7} < \frac{15}{8}\leq d(A, T_m(A))$. Therefore, it is not difficult to see the new plan is cheaper than
the original, contradicting optimality. Thus, the original partial transport problem
is equivalent to a new one with source
$\Omega \setminus \Omega_{1}$ and target $\Lambda$; in the new problem we do
not have a cut locus issue -- this ensures an interior ball condition; moreover, it is not difficult to see that if $x$ is a free boundary point away from the common region, $c_x(x, \Lambda)$ is contained in a cone on the tangent space with vertex at $x$ whose opening is strictly less than $\pi$. Thus, we may proceed as in the proof of Theorem \ref{rsemi} to obtain local semiconvexity of the free boundary away from the common region. 
\end{exmp}


\noindent \textbf{Acknowledgments.} This work was initiated while the authors were participating in the 2012 AMSI/ANU/UQ Winter School on Geometric Partial Differential Equations in Brisbane, Australia. The excellent research environment provided by the winter school is kindly acknowledged. E. Indrei was supported by an NSF EAPSI fellowship during various stages of this work and would like to thank his EAPSI host Neil Trudinger for suggesting this line of research. The authors are also grateful to Robert McCann and Alessio Figalli for intellectually stimulating discussions and for their careful remarks on a preliminary version of the paper. 

\vskip 1in

\signsc
\signei
\end{document}